\newtheorem{theorem}{Theorem}
\newtheorem{lemma}[theorem]{Lemma}
\newtheorem{corollary}[theorem]{Corollary}
\newtheorem{proposition}[theorem]{Proposition}
\theoremstyle{definition}
\newtheorem{definition}[theorem]{Definition}
\newtheorem{example}[theorem]{Example}
\theoremstyle{remark}
\newtheorem{remark}[theorem]{Remark}
\title{Affine semigroups having a unique Betti element}
\author{{P.A.} {Garc\'{\i}a S\'anchez}}
\address{Departamento de \'Algebra, Universidad de Granada, E-18071 Granada, Espa\~na}
\email{pedro@ugr.es}
\author{I. Ojeda}
\address{Departamento de Matem\'{a}ticas, Universidad de Extremadura,  E-06071 Badajoz, Espa\~na}
\email{ojedamc@unex.es}
\author{{J.C.} Rosales}
\address{Departamento de \'Algebra, Universidad de Granada, E-18071 Granada, Espa\~na}
\email{jrosales@ugr.es}
\thanks{The first and third authors are supported by the projects MTM2010-15595 and FQM-343,  FQM-5849, and FEDER funds. The second author is supported by the project MTM2007-64704, National Plan I+D+I and by Junta de Extremadura (FEDER funds)}
\subjclass[2010]{20M14 (Primary) 20M05 (Secondary).}
\keywords{
Commutative monoid; affine semigroup; free semigroup; Betti element; gluing of semigroups; complete intersection; Graver basis.}
\begin{document}

\begin{abstract}
We characterize affine semigroups having one Betti element and we compute some relevant non-unique factorization invariants for these semigroups. As an example, we particularize our description to numerical semigroups.
\end{abstract}

\maketitle 
 
\section*{Introduction}

An \textbf{affine semigroup} is a finitely generated submonoid of $\mathbb N^r$ for some positive integer $r$, where $\mathbb N$ denotes the set of nonnegative integers. Thus any affine semigroup is cancellative ($\mathbf{a}+\mathbf{b}=\mathbf{a}+\mathbf{c}$ implies $\mathbf{b} = \mathbf{c}$), reduced (its only unit is $0,$ the identity element), and torsion free ($k \mathbf{a} = k \mathbf{b}$ for $k$ a positive integer implies that $\mathbf{a} = \mathbf{b}$). In fact the converse is also true by Grillet's theorem: \emph{if $S$ is a finitely generated cancellative, reduced, torsion free monoid, then it is an affine semigroup} (see for instance \cite[Theorem 3.11]{RGS99}).  

Every affine semigroup is finitely presented, and this means, that it can be described by a finite presentation: a finite set of `free' generators and a finite set of relations among them.  The elements whose factorizations are involved in the relations of a minimal presentation are called \textbf{Betti elements} (since from them one can compute the minimal free resolution of the semigroup ring associated to the monoid).

In this manuscript we study affine semigroups having just one Betti element. It turns out that these monoids have a flower shape, since they are the gluing of several \textbf{free monoids} (in the categorical sense,  that is, semigroups that are isomorphic as monoid to $\mathbb{N}^s$ for some $s$), and these monoids, the petals, are glued by a single element, the stem (see Theorem \ref{Th.Main}). The concept of gluing and its basic properties are recalled in Section \ref{sec:unico-betti}. For numerical semigroups we are able to give a characterization in terms of the minimal generating set, which allow us to construct as many examples as we like.

Recently, several manuscripts take advantage of minimal presentations to compute non-unique factorization invariants. We show in the last section how to compute the elasticity, the maximum of the Delta set, the catenary and tame degree, and the $\omega$-invariant of any affine semigroup with a single Betti element. As a consequence we obtain that the last three invariants in this list coincide (see Theorem \ref{Th.Main2}), as for monoids having a generic presentation (see \cite{BGSG}).

\section{Affine semigroups with a single Betti element}\label{sec:unico-betti}

Let $S$ be an affine semigroup minimally generated by $\mathcal{A} := \{\mathbf{a}_1,\ldots, \mathbf{a}_p\} \subset \mathbb{N}^r$, that is to say, $S = \mathbb{N} \mathcal{A} := \mathbb{N} \mathbf{a}_1 + \cdots + \mathbb{N} \mathbf{a}_p$ and no proper subset of $\mathcal{A}$ generates $S.$ Let $A$ denote the $r \times p-$integer matrix whose columns are $\mathbf{a}_1,\ldots, \mathbf{a}_p.$ The monoid map $$\pi_\mathcal{A} : \mathbb{N}^p \longrightarrow S;\ \mathbf{u} = (u_1, \ldots, u_p) \longmapsto A \mathbf{u} = \sum_{i=1}^p u_i \mathbf{a}_i$$ is sometimes known as the \textbf{factorization homomorphism} associated to $\mathcal{A}$. For every $\mathbf{a} \in S$, the set $\mathsf{Z}(\mathbf{a}) := \pi_\mathcal{A}^{-1}(\mathbf{a})$ is the \textbf{set of factorizations} of $\mathbf{a}$.

Let $\equiv_\mathcal{A}$ be the kernel congruence of $\pi_\mathcal{A}$, that is, $\mathbf{u} \equiv_\mathcal{A} \mathbf{v}$ if $\pi_\mathcal{A}(\mathbf{u})=\pi_\mathcal{A}(\mathbf{v})$, or equivalenty, $\mathbf{u}$ and $\mathbf{v}$ are factorizations of the same element in $S$. Notice that $S$ is isomorphic to the monoid ${\mathbb N}^p/\! \equiv_\mathcal{A}$.

Given $\sigma \subseteq {\mathbb N}^p\times {\mathbb N}^p$, the congruence generated by $\sigma$ is the least congruence on $\mathbb{N}^p$ containing $\sigma$. If $\equiv_\mathcal{A}$ is the congruence generated by $\sigma$, then we say that $\sigma$ is a presentation of $S$. R\'edei's theorem (see \cite{Redei}) states that every finitely generated monoid is finitely presented. A presentation for $S$ is a minimal presentation if none of its proper subsets generates $\equiv_\mathcal{A}$. In our setting, all minimal presentations have the same cardinality $\nu(S)$ (see for instance \cite[Corollary 9.5]{RGS99}). It is well known that $\nu(S) \geq p-\mathrm{rank}(A)$ and $S$ is said to be a \textbf{complete intersection} if the equality holds.

Moreover, given a minimal presentation $\sigma$ for $S,$ the set $$\big\{ \pi_\mathcal{A}(\mathbf{u})\ \mid\ (\mathbf{u}, \mathbf{v}) \in \sigma \big\} \subset S$$ does not depend on $\sigma$ (see for instance \cite[Chapter 9]{RGS99}). Following \cite{uniquely}, we will write $\textrm{Betti}(S)$ for the above set and we will say that $\mathbf{b} \in \textrm{Betti}(S)$ is a Betti element of $S.$ Notice that $\textrm{Betti}(S) = \varnothing$ if and only if $S$ is a free affine semigroup. Every free affine semigroup is a complete intersection with $\nu(S)=0$. In particular, free affine semigroups are complete intersections.

For an element $\mathbf{u} = (u_1, \ldots, u_p) \in \mathbb N^p$, we define its support as the set $\textrm{supp}(\mathbf{u}) := \{i \mid u_i \neq 0,\ i \in \{1,\ldots,p\}\}$. Notice that $\mathbf{u} = (u_1, \ldots, u_p)$ and $\mathbf{v} = (v_1, \ldots, v_p) \in \mathbb N^p$ have disjoint support if and only if the usual dot product $\mathbf{u} \cdot \mathbf{v} = \sum_{i=1}^p u_i v_i$ equals zero. We emphasize that a necesary (but not sufficient) condition for a presentation $\sigma$ for $S$ to be minimal is that $\mathbf{u}$ and $\mathbf{v}$ have disjoint support and $\mathrm{gcd}(\mathbf{u}, \mathbf{v}) = 1,$ for every $(\mathbf{u}, \mathbf{v}) \in \sigma$ (see, e.g., \cite[Chapter 9]{RGS99}).

\begin{proposition}\label{prop-unificadora}
Let $\mathbf d\in S$ be such that $\mathrm{Betti}(S)=\{\mathbf d\}$. Assume that $\mathsf Z(\mathbf d)=\{\mathbf v_1,\ldots, \mathbf v_s\}$. Let $\mathbf a\in S$.
\begin{itemize}
\item[(a)] $\#\mathsf Z(\mathbf a)>1$ if and only if $\mathbf a-\mathbf d\in S$.

\item[(b)] For every $i\neq j$, $\mathbf v_i \cdot \mathbf v_j=0$. 

\item[(c)] There exists $a\in \mathbb N$, $\mathbf b\in S$ and $\mathbf w\in \mathbb N^p$ with $\mathsf Z(\mathbf b)=\{\mathbf w\}$, such that for every $\mathbf u\in \mathsf Z(\mathbf a)$,  $\mathbf u=\sum_{i=1}^s\alpha_i \mathbf v_i + \mathbf w$, for some $\alpha_1,\ldots,\alpha_s\in \mathbb N$, with $\sum_{i=1}^s\alpha_i = a$. If $\#\mathsf Z(\mathbf a)>1$, then $a>0$. 

\end{itemize} 
\end{proposition}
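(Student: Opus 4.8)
The plan is to reduce everything to the standard description of the kernel congruence via a minimal presentation: if $\sigma$ is a minimal presentation of $S$, then $\mathbf u \equiv_\mathcal{A} \mathbf v$ if and only if there is a chain $\mathbf u = \mathbf z_0, \mathbf z_1, \ldots, \mathbf z_n = \mathbf v$ with $\{\mathbf z_l, \mathbf z_{l+1}\} = \{\mathbf t_l + \mathbf x_l,\ \mathbf t_l + \mathbf y_l\}$ for some $\mathbf t_l \in \mathbb N^p$ and some $(\mathbf x_l, \mathbf y_l) \in \sigma$ (see \cite[Chapter 9]{RGS99}). Since $\mathrm{Betti}(S)=\{\mathbf d\}$, every $(\mathbf x,\mathbf y)\in\sigma$ has $\pi_\mathcal{A}(\mathbf x)=\pi_\mathcal{A}(\mathbf y)=\mathbf d$, so $\mathbf x,\mathbf y\in\mathsf Z(\mathbf d)=\{\mathbf v_1,\ldots,\mathbf v_s\}$ with $\mathbf x\neq\mathbf y$; in particular $s\geq 2$, and every elementary step replaces one $\mathbf v_i$ by some $\mathbf v_j$ ($i\neq j$) inside a context $\mathbf t_l$. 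I first record the observation used throughout: \emph{if $\#\mathsf Z(\mathbf a)>1$ then every $\mathbf u\in\mathsf Z(\mathbf a)$ dominates some $\mathbf v_i$ coordinatewise}, because choosing $\mathbf u'\in\mathsf Z(\mathbf a)$ with $\mathbf u'\neq\mathbf u$, the first step of a chain from $\mathbf u$ to $\mathbf u'$ rewrites $\mathbf u$, forcing $\mathbf u=\mathbf t_0+\mathbf v_i$. Part (a) now follows at once: for the forward direction this gives $\mathbf a=\pi_\mathcal{A}(\mathbf t_0)+\mathbf d$, so $\mathbf a-\mathbf d=\pi_\mathcal{A}(\mathbf t_0)\in S$; conversely, if $\mathbf a-\mathbf d=\pi_\mathcal{A}(\mathbf z)\in S$, then $\mathbf v_1+\mathbf z$ and $\mathbf v_2+\mathbf z$ are distinct factorizations of $\mathbf a$.

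Part (b) is a short consequence of (a) together with the fact that $S$ is reduced. Suppose $\mathbf v_i\cdot\mathbf v_j\neq 0$ for some $i\neq j$, and let $\mathbf e$ be the coordinatewise minimum of $\mathbf v_i$ and $\mathbf v_j$, so $\mathbf e\neq 0$ and hence $\pi_\mathcal{A}(\mathbf e)\neq 0$. Put $\mathbf a=\pi_\mathcal{A}(\mathbf v_i-\mathbf e)=\mathbf d-\pi_\mathcal{A}(\mathbf e)$; then $\mathbf v_i-\mathbf e$ and $\mathbf v_j-\mathbf e$ are distinct factorizations of $\mathbf a$, so (a) yields $\mathbf a-\mathbf d=-\pi_\mathcal{A}(\mathbf e)\in S$. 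As $\pi_\mathcal{A}(\mathbf e)\in S\subseteq\mathbb N^r$ too, a nonzero vector and its negative would both lie in $\mathbb N^r$, a contradiction.

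For part (c) I use that, by (b), the supports $D_i:=\mathrm{supp}(\mathbf v_i)$ are pairwise disjoint. For $\mathbf x\in\mathbb N^p$ set $m_i(\mathbf x):=\max\{k\in\mathbb N: \mathbf x\geq k\,\mathbf v_i \text{ coordinatewise}\}$; disjointness gives the displacement rules $m_i(\mathbf x+\mathbf v_i)=m_i(\mathbf x)+1$ and $m_i(\mathbf x+\mathbf v_j)=m_i(\mathbf x)$ for $j\neq i$. For $\mathbf u\in\mathsf Z(\mathbf a)$ write $\mathbf u=\sum_{i=1}^s m_i(\mathbf u)\,\mathbf v_i+\mathbf w(\mathbf u)$ with $\mathbf w(\mathbf u):=\mathbf u-\sum_i m_i(\mathbf u)\mathbf v_i\in\mathbb N^p$; by maximality of the $m_i$ and disjoint supports, $\mathbf w(\mathbf u)\not\geq\mathbf v_i$ for all $i$, so the Observation forces $\#\mathsf Z(\pi_\mathcal{A}(\mathbf w(\mathbf u)))=1$. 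The crux is that $\mathbf w(\mathbf u)$ and $a(\mathbf u):=\sum_i m_i(\mathbf u)$ are independent of $\mathbf u$. Taking one elementary step $\mathbf u=\mathbf t+\mathbf v_i\to\mathbf u'=\mathbf t+\mathbf v_j$, the displacement rules give $m_i(\mathbf u')=m_i(\mathbf u)-1$, $m_j(\mathbf u')=m_j(\mathbf u)+1$, and $m_k(\mathbf u')=m_k(\mathbf u)$ otherwise, whence $a(\mathbf u')=a(\mathbf u)$ and, cancelling $\mathbf v_i-\mathbf v_j$, also $\mathbf w(\mathbf u')=\mathbf w(\mathbf u)$. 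Since any two factorizations of $\mathbf a$ are joined by a chain of such steps, $\mathbf w:=\mathbf w(\mathbf u)$ and $a:=a(\mathbf u)$ are constant on $\mathsf Z(\mathbf a)$; setting $\mathbf b:=\pi_\mathcal{A}(\mathbf w)$ and $\alpha_i:=m_i(\mathbf u)$ gives the stated decomposition with $\sum_i\alpha_i=a$. Finally, if $\#\mathsf Z(\mathbf a)>1$ then $a>0$, since $a=0$ would give $\mathbf u=\mathbf w$ for every $\mathbf u$, i.e.\ $\mathsf Z(\mathbf a)=\{\mathbf w\}$.

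The main obstacle is precisely the independence claims in (c): the existence of the decomposition is routine bookkeeping, but showing it is intrinsic to $\mathbf a$ rather than to the chosen factorization requires the elementary-step computation, which rests essentially on the disjoint supports established in (b). I expect the only delicate points to be the two displacement rules for $m_i$ and the verification that $\mathbf w(\mathbf u)$ dominates no $\mathbf v_i$, both of which use disjointness of the $D_i$.
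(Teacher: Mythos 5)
Your proof is correct, and while parts (a) and (b) run essentially parallel to the paper's (the paper subtracts a single unit vector $\mathbf e_i$ in (b) where you subtract the full meet $\mathbf v_i\wedge\mathbf v_j$, but the contradiction with $S\subseteq\mathbb N^r$ being reduced is the same), your treatment of (c) takes a genuinely different route. The paper obtains the decomposition by a greedy descent --- repeatedly subtracting some $\mathbf v\in\mathsf Z(\mathbf d)$ from $\mathbf u$ until no factorization of $\mathbf d$ divides the remainder, invoking the proof of \cite[Lemma 1]{uniquely} at each stage --- and then proves that $a$ and $\mathbf b$ are intrinsic to $\mathbf a$ by an a posteriori cancellation argument in $S$: if $a\mathbf d+\mathbf b=a'\mathbf d+\mathbf b'$ with $a>a'$ then $\mathbf b'-\mathbf d\in S$, contradicting via (a) that $\mathbf b'$ factors uniquely. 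You instead give a closed-form for the coefficients, $\alpha_i=m_i(\mathbf u)=\max\{k:\mathbf u\ge k\mathbf v_i\}$, and establish invariance by a conservation-law computation along the elementary rewriting steps of a chain joining two factorizations; you also re-derive the needed consequence of \cite[Lemma 1]{uniquely} (every factorization of an element with $\#\mathsf Z(\mathbf a)>1$ dominates some $\mathbf v_i$) directly from the chain characterization of $\equiv_{\mathcal A}$ rather than citing it. What your version buys is self-containedness and an explicit, canonical choice of the $\alpha_i$ (which in the paper's argument are only shown to have a well-defined sum); what the paper's version buys is brevity and independence from the combinatorics of chains, since the uniqueness step happens entirely at the level of $S$. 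Both arguments correctly use the disjointness of supports from (b) at the crucial point, and neither is circular.
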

\begin{proof}
(a) Assume that $\mathbf{a}=\mathbf{b}+\mathbf d$ for some $\mathbf b\in S$. Take $\mathbf w\in \mathsf Z(\mathbf b)$. Since $\mathbf d\in\mathrm{Betti}(S)$, there exists $\mathbf u\neq \mathbf v\in \mathsf Z(\mathbf d)$. Hence $\mathbf w+ \mathbf u\neq \mathbf w+\mathbf v\in \mathsf Z(\mathbf a)$. Conversely, if $\#\mathsf Z(\mathbf a)>1$, then \cite[Lemma 1]{uniquely}, forces $\mathbf a -\mathbf d\in S$.

\noindent (b) Suppose on the contrary that there exists $j\neq k\in \{1,\ldots,s\}$ and  $i \in \{1, \ldots, p\}$ with $i \in \mathrm{supp}(\mathbf{v}_j) \cap \mathrm{supp}(\mathbf{v}_k).$ Let $\mathbf{a} = \pi_\mathcal{A}(\mathbf{v}_j - \mathbf{e}_i).$ Then $\mathbf{a}-\mathbf{d}\not\in S$ and $\#\mathsf Z(\mathbf{a}) \geq 2$ (because $\mathbf{v}_j - \mathbf{e}_i\neq \mathbf{v}_k - \mathbf{e}_i \in \mathbb{N}^p$), in contradiction with the preceding statement.

\noindent (c) If $\pi^{-1}(\mathbf{a}) = \{\mathbf{u}\}$, we are done by taking $\mathbf{b} = \mathbf{a}$ and $\alpha_1 = \cdots = \alpha_s = 0$. Otherwise, from the proof of \cite[Lemma 1]{uniquely}, there exists $\mathbf{v} \in \mathsf{Z}(\mathbf{d}),$ such that $\mathbf{u} - \mathbf{v} = \mathbf{u}' \in \mathbb{N}^p$. Let $\mathbf{a}' = \pi_\mathcal{A}(\mathbf{u}');$ thus we have $\mathbf{a}' \in S$ and $\mathbf{u}' \in \mathsf{Z}(\mathbf{a}').$  So by repeating the above argument as many times as needed we obtain $\mathbf{u} = \sum_{i=1}^s \alpha_i \mathbf{v}_i + \mathbf{w}$ with $\pi_\mathcal{A}(\mathbf{w}) = \mathbf{b},$ for some $\alpha_i \in \mathbf{N},\ i = 1, \ldots, k,$ and $\mathbf{b} \in S$ with unique factorization. This process stops since $\mathbf u'< \mathbf u$ in $\mathbb N^p$ and there are no infinite descending chains with respect to $<$. 

Let us see that $a$ and $\mathbf b$ (and thus $\mathbf w$), depend exclusively on $\mathbf a$. Notice that $\mathbf{a} = \pi_\mathcal A(\mathbf{u}) = \pi_\mathcal A\big(\sum_{i=1}^s \alpha_i \mathbf{v}_i + \mathbf{w}\big) =  \pi_\mathcal A\big(\sum_{i=1}^s \alpha_i \mathbf{v}_i \big) + \pi_\mathcal A(\mathbf{w}) = a \mathbf{d} + \mathbf{b}$. If $a\mathbf d+\mathbf b= a'\mathbf d+\mathbf b'$ with $a>a'\in \mathbb N$ and $\mathbf b,\mathbf b'\in S$ with unique factorizations, then $(a-a')\mathbf d+\mathbf b=\mathbf b'$. This leads to $\mathbf b'-\mathbf d\in S$, which contradicts the fact that $\mathbf b'$ has a unique factorization (1).
\end{proof}

\subsection*{Gluings}

Let $\mathcal{A}_1$ and $\mathcal{A}_2$ be disjoint nontrivial subsets of $\mathcal{A}$ such that $\mathcal{A} = \mathcal{A}_1 \cup \mathcal{A}_2$. Set $S_1$ and $S_2$ to be the affine semigroups generated by $\mathcal A_1$ and $\mathcal A_2,$ respectively. We say that $S$ is the \textbf{gluing} of $S_1$ and $S_2$ if there exists $\mathbf{d} \in S_1\cap S_2\setminus\{0\}$ such that $\mathbb{Z} \mathcal{A}_1 \cap \mathbb{Z} \mathcal{A}_2 = \mathbb{Z} \mathbf{d}$, where $\mathbb{Z} \mathcal{A}$ stands for the subgroup of $\mathbb Z^p$ generated by $\mathcal{A}$ (and $\mathbb{Z}$ is the set of integers). We also say that $S$ is the gluing of $S_1$ and $S_2$ by $\mathbf{d}$.

\begin{proposition}\label{Th1.4.Ro97}
If $S$ is the gluing of $S_1$ and $S_2$, then $\nu(S) = \nu(S_1) + \nu(S_2) + 1.$
\end{proposition}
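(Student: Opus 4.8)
The plan is to exhibit an explicit presentation of $S$ of cardinality $\nu(S_1)+\nu(S_2)+1$ and then compute $\nu(S)$ directly, showing the two numbers agree; since all minimal presentations of $S$ have the same cardinality, this suffices. Write $\mathcal A_1=\{\mathbf a_1,\ldots,\mathbf a_k\}$ and $\mathcal A_2=\{\mathbf a_{k+1},\ldots,\mathbf a_p\}$, so $\mathbb N^p=\mathbb N^k\times\mathbb N^{p-k}$ and each factorization splits as $\mathbf u=(\mathbf u_1,\mathbf u_2)$. Since $\mathcal A$ minimally generates $S$, each $\mathcal A_i$ minimally generates $S_i$ (a dependence inside $S_i$ would be one inside $S$), so $\nu(S_i)$ is attained by a minimal presentation $\sigma_i$; viewing $\sigma_1\subseteq(\mathbb N^k\times\{0\})^2$ and $\sigma_2\subseteq(\{0\}\times\mathbb N^{p-k})^2$, both embed in $\mathbb N^p\times\mathbb N^p$. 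As $\mathbf d\in S_1\cap S_2$, there are $\mathbf w_1\in\mathbb N^k$ and $\mathbf w_2\in\mathbb N^{p-k}$ (suitably padded) of disjoint support with $\pi_\mathcal A(\mathbf w_1)=\mathbf d=\pi_\mathcal A(\mathbf w_2)$. I claim $\sigma:=\sigma_1\cup\sigma_2\cup\{(\mathbf w_1,\mathbf w_2)\}$ is a minimal presentation of $S$.

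For the bound $\nu(S)\le\nu(S_1)+\nu(S_2)+1$ I would check that $\sigma$ generates $\equiv_\mathcal A$. Take $(\mathbf u,\mathbf v)\in\equiv_\mathcal A$ and split it; then $\pi_{\mathcal A_1}(\mathbf u_1)-\pi_{\mathcal A_1}(\mathbf v_1)=\pi_{\mathcal A_2}(\mathbf v_2)-\pi_{\mathcal A_2}(\mathbf u_2)$ lies in $\mathbb Z\mathcal A_1\cap\mathbb Z\mathcal A_2=\mathbb Z\mathbf d$, hence equals $n\mathbf d$ for some $n\in\mathbb Z$, and by symmetry I may assume $n\ge 0$. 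Then $\pi_{\mathcal A_1}(\mathbf u_1)=\pi_{\mathcal A_1}(\mathbf v_1+n\mathbf w_1)$, so $\sigma_1$ connects $(\mathbf u_1,\mathbf u_2)$ to $(\mathbf v_1+n\mathbf w_1,\mathbf u_2)$; applying the gluing relation $n$ times reaches $(\mathbf v_1,\mathbf u_2+n\mathbf w_2)$; and since $\pi_{\mathcal A_2}(\mathbf u_2+n\mathbf w_2)=\pi_{\mathcal A_2}(\mathbf v_2)$, the presentation $\sigma_2$ connects this to $(\mathbf v_1,\mathbf v_2)$. This transfer argument is the computational core of the upper bound and is exactly where the gluing hypothesis $\mathbb Z\mathcal A_1\cap\mathbb Z\mathcal A_2=\mathbb Z\mathbf d$ is used.

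The reverse inequality, which simultaneously yields minimality, is the main obstacle. Here I would invoke the combinatorial description of minimal presentations in \cite[Chapter 9]{RGS99}: for $\mathbf b\in S$ let $\nabla_\mathbf b$ be the graph on $\mathsf Z(\mathbf b)$ joining $\mathbf x,\mathbf y$ whenever $\mathbf x\cdot\mathbf y\neq 0$, so that $\mathbf b\in\mathrm{Betti}(S)$ precisely when $\nabla_\mathbf b$ is disconnected and $\nu(S)=\sum_\mathbf b(\mathrm{nc}(\mathbf b)-1)$, where $\mathrm{nc}(\mathbf b)$ denotes the number of connected components. The task becomes proving the additivity $\sum_\mathbf b(\mathrm{nc}_S(\mathbf b)-1)=\nu(S_1)+\nu(S_2)+1$. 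The delicate point is that a Betti element $\mathbf b$ of $S_1$ may acquire, inside $S$, new factorizations involving $\mathcal A_2$; but any such factorization forces the $\pi_{\mathcal A_2}$-image of its $\mathcal A_2$-part into $\mathbb Z\mathcal A_1\cap\mathbb Z\mathcal A_2=\mathbb Z\mathbf d$, hence to be a positive multiple of $\mathbf d$, and one verifies these extra vertices attach to the existing components without merging them, so $\mathrm{nc}_S(\mathbf b)=\mathrm{nc}_{S_1}(\mathbf b)$; the symmetric statement holds over $S_2$.

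Finally the element $\mathbf d$ contributes the extra $+1$: its $\mathcal A_1$-pure factorization $\mathbf w_1$ and $\mathcal A_2$-pure factorization $\mathbf w_2$ have disjoint support, and no chain in $\nabla_\mathbf d$ can join the $\mathcal A_1$-supported factorizations to the $\mathcal A_2$-supported ones, again because any mixed factorization would have to split off a multiple of $\mathbf d$ by the intersection condition. Carefully accounting for these contributions (and noting that the rank identity $\mathrm{rank}(A)=\mathrm{rank}(A_1)+\mathrm{rank}(A_2)-1$ forced by $\mathbb Z\mathcal A_1\cap\mathbb Z\mathcal A_2=\mathbb Z\mathbf d$ is consistent with the extra relation) gives $\nu(S)=\nu(S_1)+\nu(S_2)+1$, and in particular shows the explicit $\sigma$ above is minimal. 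I expect the bookkeeping of how Betti elements of $S_1$ and $S_2$ survive, and the verification that the two families of pure factorizations of $\mathbf d$ stay in distinct components, to be the step requiring the most care.
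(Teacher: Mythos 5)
The paper offers no argument of its own here: its ``proof'' is the citation to \cite[Theorem 1.4]{Ro97}, so what you are really attempting is a reconstruction of Rosales' theorem. The first half of your attempt is correct and complete. The transfer argument showing that $\sigma=\sigma_1\cup\sigma_2\cup\{(\mathbf w_1,\mathbf w_2)\}$ generates $\equiv_{\mathcal A}$ is exactly the standard use of the hypothesis $\mathbb Z\mathcal A_1\cap\mathbb Z\mathcal A_2=\mathbb Z\mathbf d$ (your reduction to $n\ge 0$ and the three-stage chain are fine, as is the observation that each $\mathcal A_i$ minimally generates $S_i$), and since every generating set of the congruence contains a minimal presentation, this yields $\nu(S)\le\nu(S_1)+\nu(S_2)+1$.

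The gap is in the reverse inequality. Having reduced it via $\nu(S)=\sum_{\mathbf b}(\mathrm{nc}(\nabla_{\mathbf b})-1)$ to statements about connected components, you assert the two decisive claims (``one verifies these extra vertices attach \dots without merging them'', ``the two families of pure factorizations of $\mathbf d$ stay in distinct components'') rather than prove them, and the first is not even correctly stated. For $\mathbf b\in\mathrm{Betti}(S_1)$ with $\mathbf b-\mathbf d\in S$, every mixed factorization $(\mathbf z_1,\mathbf z_2)$ has $\pi_{\mathcal A_2}(\mathbf z_2)=n\mathbf d$ with $n\ge 1$ and $\mathbf z_1\in\mathsf Z_{S_1}(\mathbf b-n\mathbf d)$; you must show both (i) that such a vertex is joined to the component of some pure factorization --- which fails when $\mathbf z_1=0$, i.e.\ when $\mathbf b$ is a multiple of $\mathbf d$, so for $\mathbf d\in\mathrm{Betti}(S_1)$ one has $\mathrm{nc}_S(\mathbf d)=\mathrm{nc}_{S_1}(\mathbf d)+1$ and your two cases overlap and would double-count --- and (ii) that no path through mixed vertices joins two pure factorizations lying in distinct components of $\nabla^{S_1}_{\mathbf b}$. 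Point (ii) is the heart of the matter: a priori a mixed $\mathbf z_1$ could meet the supports of pure factorizations from two different $S_1$-components, and ruling this out (e.g.\ by connecting $(\mathbf z_1,\mathbf z_2)$ to $(\mathbf z_1+n\mathbf w_1,0)$ and tracking components through the decompositions $\mathbf b=\mathbf b_1+\mathbf b_2$ with $\mathbf b_i\in S_i$) is precisely the content of \cite[Theorem 1.4]{Ro97}. You also implicitly use that no $\mathbf b$ outside $\mathrm{Betti}(S_1)\cup\mathrm{Betti}(S_2)\cup\{\mathbf d\}$ contributes, which is Proposition \ref{Th10.uniquely} and rests on the same analysis. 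The strategy is the right one, but the combinatorial step that makes the lower bound work is missing.
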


\begin{proof}
See \cite[Theorem 1.4]{Ro97}.
\end{proof}

As an easy corollary of this result, we obtain the following (see \cite[Theorem 3.1]{fischer97}).

\begin{corollary}\label{Cor1.4.Ro97}
Let $S$ be the gluing of $S_1$ and $S_2.$ If $S_1$ and $S_2$ are complete intersections, then $S$ is a complete intersection.
\end{corollary}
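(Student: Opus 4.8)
The plan is to deduce the corollary directly from Proposition~\ref{Th1.4.Ro97} by exploiting the additivity of the gluing invariant together with the additivity of the rank of the defining matrix. Recall that $S$ is a complete intersection precisely when $\nu(S) = p - \mathrm{rank}(A)$, where $p$ is the number of minimal generators of $S$ and $A$ is the matrix whose columns are those generators. So the whole argument reduces to checking that the ``defect'' $\nu(S) - (p - \mathrm{rank}(A))$ is additive under gluing and that it vanishes on a gluing exactly when it vanishes on both pieces.

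First I would set up notation: write $p_1 = \#\mathcal{A}_1$ and $p_2 = \#\mathcal{A}_2$, so that $p = p_1 + p_2$ since $\mathcal{A}_1$ and $\mathcal{A}_2$ are disjoint and their union is $\mathcal{A}$. Let $A_1$ and $A_2$ be the matrices whose columns are the elements of $\mathcal{A}_1$ and $\mathcal{A}_2$ respectively. By Proposition~\ref{Th1.4.Ro97} we have $\nu(S) = \nu(S_1) + \nu(S_2) + 1$. Since $S_1$ and $S_2$ are complete intersections, $\nu(S_i) = p_i - \mathrm{rank}(A_i)$ for $i = 1,2$. Substituting gives
\begin{equation*}
\nu(S) = \big(p_1 - \mathrm{rank}(A_1)\big) + \big(p_2 - \mathrm{rank}(A_2)\big) + 1 = p - \big(\mathrm{rank}(A_1) + \mathrm{rank}(A_2) - 1\big).
\end{equation*}
Thus it suffices to prove that $\mathrm{rank}(A) = \mathrm{rank}(A_1) + \mathrm{rank}(A_2) - 1$.

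The key step, and the part I expect to be the main obstacle, is this rank identity. The point is that $\mathrm{rank}(A) = \dim_{\mathbb{Q}} (\mathbb{Q}\mathcal{A})$ equals the dimension of the rational vector space spanned by all the generators, and similarly $\mathrm{rank}(A_i) = \dim_{\mathbb{Q}}(\mathbb{Q}\mathcal{A}_i)$. Since $\mathbb{Q}\mathcal{A} = \mathbb{Q}\mathcal{A}_1 + \mathbb{Q}\mathcal{A}_2$, Grassmann's dimension formula yields
\begin{equation*}
\mathrm{rank}(A) = \mathrm{rank}(A_1) + \mathrm{rank}(A_2) - \dim_{\mathbb{Q}}\big(\mathbb{Q}\mathcal{A}_1 \cap \mathbb{Q}\mathcal{A}_2\big).
\end{equation*}
The gluing hypothesis $\mathbb{Z}\mathcal{A}_1 \cap \mathbb{Z}\mathcal{A}_2 = \mathbb{Z}\mathbf{d}$ with $\mathbf{d} \neq 0$ forces the intersection of the rational spans to be one-dimensional: indeed $\mathbb{Q}\mathbf{d} \subseteq \mathbb{Q}\mathcal{A}_1 \cap \mathbb{Q}\mathcal{A}_2$ is immediate, and conversely any vector in $\mathbb{Q}\mathcal{A}_1 \cap \mathbb{Q}\mathcal{A}_2$ can be cleared of denominators to land in $\mathbb{Z}\mathcal{A}_1 \cap \mathbb{Z}\mathcal{A}_2 = \mathbb{Z}\mathbf{d} \subseteq \mathbb{Q}\mathbf{d}$, so the intersection is exactly $\mathbb{Q}\mathbf{d}$, of dimension $1$. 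Here I would need to be slightly careful justifying that an element of $\mathbb{Q}\mathcal{A}_1 \cap \mathbb{Q}\mathcal{A}_2$ scaled by a common denominator lies in $\mathbb{Z}\mathcal{A}_1 \cap \mathbb{Z}\mathcal{A}_2$; this follows because scaling by a positive integer keeps a vector in each integral span separately. Plugging $\dim_{\mathbb{Q}}(\mathbb{Q}\mathcal{A}_1 \cap \mathbb{Q}\mathcal{A}_2) = 1$ into the displayed formula gives the desired rank identity, and combined with the computation above we conclude $\nu(S) = p - \mathrm{rank}(A)$, so $S$ is a complete intersection.
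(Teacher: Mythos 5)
Your proof is correct and takes essentially the route the paper intends: the corollary is presented there as an immediate consequence of Proposition~\ref{Th1.4.Ro97}, and your argument simply supplies the details, namely the additivity $\nu(S)=\nu(S_1)+\nu(S_2)+1$ combined with the rank identity $\mathrm{rank}(A)=\mathrm{rank}(A_1)+\mathrm{rank}(A_2)-1$, which you correctly deduce from $\mathbb{Q}\mathcal{A}_1\cap\mathbb{Q}\mathcal{A}_2=\mathbb{Q}\mathbf{d}$ via the gluing condition. The verification that clearing denominators lands a rational vector of the intersection inside $\mathbb{Z}\mathcal{A}_1\cap\mathbb{Z}\mathcal{A}_2=\mathbb{Z}\mathbf{d}$ is handled properly (using a common multiple of the two denominators), so there is no gap.
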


It is also known what are the Betti elements  of a gluing.

\begin{proposition}\label{Th10.uniquely}
If $S$ is the gluing of $S_1$ and $S_2$ by $\mathbf{d}$, then $\mathrm{Betti}(S) = \mathrm{Betti}(S_1) \cup \mathrm{Betti}(S_2) \cup \{\mathbf{d}\}.$
\end{proposition}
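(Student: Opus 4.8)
The plan is to build an explicit minimal presentation of $S$ out of minimal presentations of $S_1$ and $S_2$ plus a single relation recording the two factorizations of $\mathbf{d}$, and then to read the Betti elements straight off it. Split $\mathbb{N}^p = \mathbb{N}^{p_1} \times \mathbb{N}^{p_2}$ according to $\mathcal{A} = \mathcal{A}_1 \sqcup \mathcal{A}_2$, so that $\pi_\mathcal{A}(\mathbf{x}_1,\mathbf{x}_2) = \pi_{\mathcal{A}_1}(\mathbf{x}_1) + \pi_{\mathcal{A}_2}(\mathbf{x}_2)$. As $\mathcal{A}$ minimally generates $S$, each $\mathcal{A}_i$ minimally generates $S_i$, so I may fix minimal presentations $\sigma_i$ of $S_i$ and regard them as sets of pairs in $\mathbb{N}^p$ supported on the $\mathcal{A}_i$-coordinates. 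Since $\mathbf{d} \in S_1 \cap S_2$, pick factorizations $\mathbf{u}_1 \in \mathbb{N}^{p_1}$ and $\mathbf{u}_2 \in \mathbb{N}^{p_2}$ with $\pi_{\mathcal{A}_1}(\mathbf{u}_1) = \mathbf{d} = \pi_{\mathcal{A}_2}(\mathbf{u}_2)$, set $\tau = \{((\mathbf{u}_1,\mathbf{0}),(\mathbf{0},\mathbf{u}_2))\}$, and propose $\sigma = \sigma_1 \cup \sigma_2 \cup \tau$ as a minimal presentation of $S$.

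The central step is to show that $\sigma$ generates $\equiv_\mathcal{A}$. Let $\sim$ denote the congruence generated by $\sigma$, and take $\mathbf{x} = (\mathbf{x}_1,\mathbf{x}_2)$ and $\mathbf{y} = (\mathbf{y}_1,\mathbf{y}_2)$ with $\pi_\mathcal{A}(\mathbf{x}) = \pi_\mathcal{A}(\mathbf{y})$. Rearranging gives $\pi_{\mathcal{A}_1}(\mathbf{x}_1) - \pi_{\mathcal{A}_1}(\mathbf{y}_1) = \pi_{\mathcal{A}_2}(\mathbf{y}_2) - \pi_{\mathcal{A}_2}(\mathbf{x}_2)$, an element of $\mathbb{Z}\mathcal{A}_1 \cap \mathbb{Z}\mathcal{A}_2 = \mathbb{Z}\mathbf{d}$; this is exactly where the gluing hypothesis enters. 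Write it as $k\mathbf{d}$ with $k \in \mathbb{Z}$ and, exchanging $\mathbf{x}$ and $\mathbf{y}$ if necessary, assume $k \geq 0$. Then $\pi_{\mathcal{A}_1}(\mathbf{x}_1) = \pi_{\mathcal{A}_1}(\mathbf{y}_1 + k\mathbf{u}_1)$, so $\sigma_1$ gives $(\mathbf{x}_1,\mathbf{x}_2) \sim (\mathbf{y}_1 + k\mathbf{u}_1, \mathbf{x}_2)$; applying the relation of $\tau$ exactly $k$ times turns this into $(\mathbf{y}_1, \mathbf{x}_2 + k\mathbf{u}_2)$; and since $\pi_{\mathcal{A}_2}(\mathbf{x}_2 + k\mathbf{u}_2) = \pi_{\mathcal{A}_2}(\mathbf{y}_2)$, the presentation $\sigma_2$ gives $(\mathbf{y}_1, \mathbf{x}_2 + k\mathbf{u}_2) \sim (\mathbf{y}_1, \mathbf{y}_2)$. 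Concatenating, $\mathbf{x} \sim \mathbf{y}$, so $\sim$ is all of $\equiv_\mathcal{A}$ and $\sigma$ is a presentation.

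It then remains to count and to extract the Betti elements. The three blocks of $\sigma$ are pairwise disjoint — pairs of $\sigma_1$ are supported on $\mathcal{A}_1$, those of $\sigma_2$ on $\mathcal{A}_2$, and the pair of $\tau$ meets both coordinate groups because $\mathbf{d} \neq 0$ — so $\#\sigma = \nu(S_1) + \nu(S_2) + 1$, which equals $\nu(S)$ by Proposition~\ref{Th1.4.Ro97}. As every presentation has at least $\nu(S)$ elements, no proper subset of $\sigma$ can present $S$, and $\sigma$ is minimal. Since $\mathrm{Betti}(S)$ may be computed from any minimal presentation and $\pi_\mathcal{A}$ restricts to $\pi_{\mathcal{A}_i}$ on the $\mathcal{A}_i$-coordinates, the pairs of $\sigma_1$ contribute precisely $\mathrm{Betti}(S_1)$, those of $\sigma_2$ contribute $\mathrm{Betti}(S_2)$, and the pair of $\tau$ contributes $\pi_\mathcal{A}(\mathbf{u}_1,\mathbf{0}) = \mathbf{d}$; hence $\mathrm{Betti}(S) = \mathrm{Betti}(S_1) \cup \mathrm{Betti}(S_2) \cup \{\mathbf{d}\}$. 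I expect the only genuine difficulty to be the second paragraph, and within it the bookkeeping that the congruences generated by $\sigma_1$ and $\sigma_2$ on the factors lift to $\mathbb{N}^p$ and remain valid after adding fixed vectors on the complementary coordinates; all the structural content sits in the identity $\mathbb{Z}\mathcal{A}_1 \cap \mathbb{Z}\mathcal{A}_2 = \mathbb{Z}\mathbf{d}$.
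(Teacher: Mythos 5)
Your proof is correct. Note, though, that the paper itself offers no argument here: its ``proof'' is a citation to \cite{uniquely}, Theorem~10, so there is no in-paper proof to match yours against. What you have written is essentially the standard gluing argument (and, as far as one can tell, the substance of the cited result): build $\sigma=\sigma_1\cup\sigma_2\cup\tau$, use the hypothesis $\mathbb{Z}\mathcal{A}_1\cap\mathbb{Z}\mathcal{A}_2=\mathbb{Z}\mathbf{d}$ to see that any two factorizations of the same element can be connected through $\sigma_1$, then $k$ applications of $\tau$, then $\sigma_2$, and read the Betti elements off the resulting minimal presentation. All the individual steps check out: each $\mathcal{A}_i$ does minimally generate $S_i$ because $\mathcal{A}$ minimally generates $S$; the lifted congruences generated by $\sigma_i$ are translation-invariant, so fixing the complementary block is harmless; and the three blocks of $\sigma$ are pairwise disjoint since $\mathbf{u}_1,\mathbf{u}_2\neq 0$. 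The one place where your route differs in spirit from a fully self-contained treatment is minimality: you get it for free from the cardinality formula $\nu(S)=\nu(S_1)+\nu(S_2)+1$ of Proposition~\ref{Th1.4.Ro97} together with the fact that every presentation contains a minimal one of cardinality $\nu(S)$. That is legitimate given that the paper states the proposition, but it is worth being aware that the cited result of \cite{Ro97} is itself proved by constructing exactly your $\sigma$ and establishing its minimality directly, so your argument is not independent of it; if you wanted a genuinely standalone proof you would need to show directly that no proper subset of $\sigma$ generates $\equiv_\mathcal{A}$ (e.g., by exhibiting, for each removed pair, two factorizations that can no longer be connected).
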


\begin{proof}
See \cite[Theorem 10]{uniquely}.
\end{proof}

\begin{example}
Let $S$ be the subsemigroup of $\mathbb{N}^2$ generated by the columns of $$A = \left(\begin{array}{ccc} 2 & 0 & 1 \\ 0 & 2 & 1 \end{array}\right).$$ Clearly, $S$ is the gluing of $S_1 = \mathbb{N} \left(\begin{array}{c} 2 \\ 0 \end{array}\right) + \mathbb{N} \left(\begin{array}{c} 0 \\ 2 \end{array}\right)$ and $S_2 = \mathbb{N} \left(\begin{array}{c} 1 \\ 1 \end{array}\right)$ by $\mathbf{d} = \left(\begin{array}{c} 2 \\ 2 \end{array}\right).$ Since $\mathrm{Betti}(S_1) = \mathrm{Betti}(S_2) = \varnothing$ (because both semigroups are free) we obtain that $\mathrm{Betti}(S) = \{\mathbf{d}\}$ by Proposition \ref{Th10.uniquely}. Now, by Proposition \ref{Th1.4.Ro97}, any minimal presentation of $S$ has cardinality $1.$ So, we conclude that $\sigma =  \{\big( (1,1,0), (0,0,2) \big) \}$ is a minimal presentation of $S.$
\end{example}

\subsection*{Primitive elements}

Let $\mathcal I(\mathcal{A})$ be the set of irreducible elements (atoms) of $\equiv_\mathcal{A}$, that is, the set of minimal elements of $\equiv_\mathcal{A}\setminus\{(0,0)\}$ with respect the usual partial order on $\mathbb N^p\times \mathbb N^p$. The set $\mathcal I(\mathcal{A})$ generates $\equiv_\mathcal{A}$ as a monoid (\cite[Chapter 8]{RGS99}). Denote by $\mathrm{Gr}_\mathcal{A}=\mathcal I (\mathcal{A})\setminus\{(\mathbf e_1,\mathbf e_1),\ldots, (\mathbf e_d,\mathbf e_d)\}$, which is known as the set of \textbf{primitive} elements of $\equiv_\mathcal{A}$ or the \textbf{Graver basis} of $\mathcal{A}$. This set generates $\equiv_\mathcal{A}$ as a congruence, it actually contains every minimal presentation of $\equiv_\mathcal{A}$ (see, e.g. \cite[Chapter 4]{Sturmfels}).

\subsection*{Circuits}

A nonzero element $(\mathbf{u}, \mathbf{v})$ in the kernel congruence of $\pi_\mathcal{A}$ is called a \textbf{circuit} of $\equiv_\mathcal{A}$ if $\mathbf{u}$ and $\mathbf{v}$ have disjoint support, $\mathrm{supp}(\mathbf{u}) \cup \mathrm{supp}(\mathbf{v})$ is minimal with respect to inclusion, and the coordinates of $(\mathbf{u}, \mathbf{v})$ are relative prime. The set of circuits of $\equiv_\mathcal{A}$ is denoted by $\mathcal{C}_\mathcal{A}.$ It is worth to be noted that the circuits of $\equiv_\mathcal{A}$ can be computed by using elementary linear algebra (see the proof of \cite[Lemma 4.9]{Sturmfels}).

\medskip
Now we are in a position to state our main theorem:

\begin{theorem}\label{Th.Main}
Let $S$ be an affine semigroup minimally generated by $\mathcal{A} = \{\mathbf{a}_1,\ldots, \mathbf{a}_p\} \subset \mathbb{N}^r$, and let $\mathbf{d} \in S$ be nonzero. The following are equivalent:
\begin{itemize}
\item[(a)] $\mathrm{Betti}(S) = \{ \mathbf{d} \}$.
\item[(b)] $\mathcal{C}_\mathcal{A} = \mathsf{Z}(\mathbf{d}) \times \mathsf{Z}(\mathbf{d}) \setminus \{ (\mathbf{u}, \mathbf{u}) \mid \pi_\mathcal{A}(\mathbf{u}) = \mathbf{d} \} = \mathrm{Gr}_\mathcal{A}$. 
\item[(c)] $S$ is gluing of some $S_1$ and $S_2$ by $\mathbf{d},$ with $\mathrm{Betti}(S_1) \cup \mathrm{Betti}(S_2) \subseteq \{\mathbf{d}\}$.
\end{itemize}
\end{theorem}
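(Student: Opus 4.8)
The plan is to prove the theorem by establishing the cyclic chain of implications $(a)\Rightarrow(b)\Rightarrow(c)\Rightarrow(a)$, leaning heavily on Proposition \ref{prop-unificadora} for the first implication and on Propositions \ref{Th1.4.Ro97} and \ref{Th10.uniquely} for the last. The structural backbone is that a single Betti element $\mathbf{d}$ forces the factorizations $\mathsf{Z}(\mathbf{d})=\{\mathbf{v}_1,\ldots,\mathbf{v}_s\}$ to have pairwise disjoint supports (Proposition \ref{prop-unificadora}(b)), so the index set $\{1,\ldots,p\}$ splits into the supports of the $\mathbf{v}_i$ together with a remainder; this disjointness is exactly what will let me read off both the circuit description in (b) and the gluing decomposition in (c).

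For $(a)\Rightarrow(b)$, I would first show that every pair $(\mathbf{v}_i,\mathbf{v}_j)$ with $i\neq j$ is a circuit. Since $\mathrm{Betti}(S)=\{\mathbf{d}\}$ means every nonzero relation in a minimal presentation comes from factorizations of $\mathbf{d}$, and since by Proposition \ref{prop-unificadora}(b) the $\mathbf{v}_i$ have pairwise disjoint support with coprime coordinates (using the disjoint-support/$\gcd=1$ condition for minimal presentations recalled in the text), each such pair satisfies the disjoint-support and coprimality requirements of a circuit; I would verify the support-minimality condition by arguing that any proper subrelation would produce a Betti element distinct from $\mathbf{d}$. Conversely I must show there are no other circuits: the key observation is that the Graver basis $\mathrm{Gr}_\mathcal{A}$ contains every minimal presentation and generates $\equiv_\mathcal{A}$, so any primitive element, when ``reduced'' via the descending process of Proposition \ref{prop-unificadora}(c), must be expressible through the $\mathbf{v}_i$; combined with support-minimality of circuits this forces $\mathcal{C}_\mathcal{A}$ to coincide exactly with the stated set and with $\mathrm{Gr}_\mathcal{A}$. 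The chain of inclusions $\mathcal{C}_\mathcal{A}\subseteq\mathrm{Gr}_\mathcal{A}$ and (the minimal presentation) $\subseteq\mathrm{Gr}_\mathcal{A}$ is standard, so the work is pinning down that all three sets collapse to $\mathsf{Z}(\mathbf{d})\times\mathsf{Z}(\mathbf{d})\setminus\{(\mathbf{u},\mathbf{u})\}$.

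For $(b)\Rightarrow(c)$, I would use the explicit circuit description to construct the gluing. The disjoint supports of the $\mathbf{v}_i$ partition (most of) $\mathcal{A}$; I would group the generators according to which $\mathbf{v}_i$ they appear in, and set $S_1$ generated by the atoms in $\mathrm{supp}(\mathbf{v}_1)$ and $S_2$ by the rest, checking that $\mathbb{Z}\mathcal{A}_1\cap\mathbb{Z}\mathcal{A}_2=\mathbb{Z}\mathbf{d}$. The fact that the circuits are exactly the pairs $(\mathbf{v}_i,\mathbf{v}_j)$ guarantees that $\mathbf{d}$ is the unique (up to scalar) element in the intersection lattice, which is precisely the gluing condition; that $\mathrm{Betti}(S_i)\subseteq\{\mathbf{d}\}$ follows because any Betti element of $S_i$ would lift to a Betti element of $S$ by Proposition \ref{Th10.uniquely}. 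Finally $(c)\Rightarrow(a)$ is immediate from Proposition \ref{Th10.uniquely}: $\mathrm{Betti}(S)=\mathrm{Betti}(S_1)\cup\mathrm{Betti}(S_2)\cup\{\mathbf{d}\}\subseteq\{\mathbf{d}\}$, and since $\mathbf{d}$ is genuinely a Betti element (it contributes the gluing relation by Proposition \ref{Th1.4.Ro97}), equality holds.

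The main obstacle I anticipate is the reverse inclusion in $(a)\Rightarrow(b)$, namely ruling out circuits other than the $(\mathbf{v}_i,\mathbf{v}_j)$. A priori the Graver basis can be much larger than a minimal presentation, so I cannot simply invoke $\mathrm{Gr}_\mathcal{A}\supseteq(\text{minimal presentation})$; I genuinely need the reduction argument of Proposition \ref{prop-unificadora}(c) to show that no primitive element can ``escape'' the span of the $\mathbf{v}_i$. The delicate point is checking that support-minimality of a hypothetical extra circuit is incompatible with its reducibility through the $\mathbf{v}_i$, which is where the disjoint-support structure from part (b) of the Proposition does the essential work.
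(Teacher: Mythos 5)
Your overall architecture coincides with the paper's: the cycle (a)$\Rightarrow$(b)$\Rightarrow$(c)$\Rightarrow$(a), with Proposition \ref{prop-unificadora}(b)--(c) powering (a)$\Rightarrow$(b) via the sandwich $\mathcal C_\mathcal A\subseteq\mathrm{Gr}_\mathcal A\subseteq\mathsf Z(\mathbf d)\times\mathsf Z(\mathbf d)$ minus the diagonal $\subseteq\mathcal C_\mathcal A$, a support partition giving (b)$\Rightarrow$(c), and Proposition \ref{Th10.uniquely} giving (c)$\Rightarrow$(a). One small point in (a)$\Rightarrow$(b): your justification of support-minimality for a pair $(\mathbf v_i,\mathbf v_j)$ --- that ``any proper subrelation would produce a Betti element distinct from $\mathbf d$'' --- is loose, since an element with two factorizations need not be a Betti element. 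The clean order of operations (which you also gesture at) is the paper's: first establish $\mathcal C_\mathcal A\subseteq\mathrm{Gr}_\mathcal A\subseteq\mathsf Z(\mathbf d)\times\mathsf Z(\mathbf d)$ using the reduction of Proposition \ref{prop-unificadora}(c) together with the minimality of primitive elements, so that any circuit supported inside $\mathrm{supp}(\mathbf v_i)\cup\mathrm{supp}(\mathbf v_j)$ is itself a pair of distinct factorizations of $\mathbf d$, and then pairwise disjointness of supports forces it to equal $(\mathbf v_i,\mathbf v_j)$ or its transpose.

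The substantive issue is in (b)$\Rightarrow$(c). You propose to verify the lattice condition $\mathbb Z\mathcal A_1\cap\mathbb Z\mathcal A_2=\mathbb Z\mathbf d$ directly, asserting that the circuit description ``guarantees that $\mathbf d$ is the unique (up to scalar) element in the intersection lattice.'' That only gives that the intersection has rank one; the actual content of the gluing condition is that the generator of this rank-one group is $\mathbf d$ itself rather than a proper fraction of it. Concretely, with $\mathcal A_1$ the generators indexed by $\mathrm{supp}(\mathbf v_1)$, any $\mathbf c\in\mathbb Z\mathcal A_1\cap\mathbb Z\mathcal A_2$ has the form $c_1\mathbf d$ where $c_1\mathbf v_1\in\mathbb Z^p$, and $c_1$ need not a priori be an integer when $\gcd(\mathbf v_1)>1$ --- which does occur, e.g.\ for $\langle 6,10,15\rangle$, where $\mathsf Z(30)=\{5\mathbf e_1,3\mathbf e_2,2\mathbf e_3\}$. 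Closing this requires using that the coordinates of each circuit $(\mathbf v_i,\mathbf v_j)$ are jointly coprime, so the numbers $\gcd(\mathbf v_i)$ are pairwise coprime, forcing $c_1\in\mathbb Z$. The paper sidesteps this computation entirely by invoking the presentation-theoretic characterization of gluings from \cite[Theorem 1.4]{Ro97}: by Remark \ref{soportes disjuntos}, the generating set $\mathrm{Gr}_\mathcal A\setminus\{(\mathbf v,\mathbf u)\}$ of $\equiv_\mathcal A$ splits as $\sigma\cup\{(\mathbf u,\mathbf v)\}$ with no pair of $\sigma$ meeting $\mathrm{supp}(\mathbf u)$, which is exactly the hypothesis of that theorem. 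Either route is viable, but if you keep the direct lattice verification you must supply the coprimality argument; as written, that step is a gap.
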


\begin{remark}\label{soportes disjuntos}
Notice that as a consequence of second equality in (b), we also obtain that $\mathbf{u} \cdot \mathbf{v}=0,$ for every $\mathbf u, \mathbf v\in \mathsf Z(\mathbf d), \mathbf u\neq \mathbf v$  (since otherwise $(\mathbf u,\mathbf v)$ would not be a circuit).
\end{remark}

\begin{proof}
\noindent \framebox{(a) $\Rightarrow$ (b).} We will prove \[\mathcal{C}_\mathcal{A} \subseteq \mathrm{Gr}_\mathcal{A} \subseteq \mathsf{Z}(\mathbf{d}) \times \mathsf{Z}(\mathbf{d}) \setminus \{ (\mathbf{u}, \mathbf{u}) \mid \pi_\mathcal{A}(\mathbf{u})  = \mathbf{d} \} \subseteq \mathcal C_\mathcal A.\]

The inclusion $\mathcal{C}_\mathcal{A} \subseteq \mathrm{Gr}_\mathcal{A}$ is well known, see \cite[Proposition 4.11]{Sturmfels}. 

Now take $(\mathbf u,\mathbf v)\in \mathrm{Gr}_\mathcal{A}$. By Proposition \ref{prop-unificadora} (c), applied to $\mathbf u$ and $\mathbf v$, there exists $\mathbf u',\mathbf v'\in \mathsf Z(\mathbf{d})$ such that $(\mathbf u,\mathbf v)-(\mathbf u',\mathbf v')\in \mathbb N^p\times \mathbb N^p$. The minimality of a primitive element with respect to the usual partial order, forces $(\mathbf u,\mathbf v)=(\mathbf u',\mathbf v')$. This proves that $\mathrm{Gr}_\mathcal{A}\subseteq \mathsf{Z}(\mathbf{d}) \times \mathsf{Z}(\mathbf{d}) \setminus \{ (\mathbf{u}, \mathbf{u}) \mid \pi_\mathcal{A}(\mathbf{u})  = \mathbf{d} \}$.

Let $(\mathbf{u}, \mathbf{v}) \in  \mathsf{Z}(\mathbf{d}) \times \mathsf{Z}(\mathbf{d}),\ \mathbf{u} \neq \mathbf{v}$. By the minimality of the supports of the elements of $\mathcal C_\mathcal A$, there exists $(\mathbf{u}', \mathbf{v}') \in \mathcal{C}_\mathcal{A}$ such that $\mathrm{supp}(\mathbf{u}')\cup \mathrm{supp}(\mathbf{v}')  \subseteq \mathrm{supp}(\mathbf{u}) \cup \mathrm{supp}(\mathbf{v})$. Since we already know that $\mathbf{u}', \mathbf{v}' \in \mathsf{Z}(\mathbf{d})$, and Propositon \ref{prop-unificadora} asserts that any two different factorizations of $\mathbf d$ have disjoint support, we conclude that either $\mathbf u'=\mathbf u$ and $\mathbf v'=\mathbf v$, or $\mathbf v'=\mathbf u$ and $\mathbf u'=\mathbf v$. Notice that $(\mathbf v', \mathbf u')$ is again a circuit, and this shows that $\mathsf{Z}(\mathbf{d}) \times \mathsf{Z}(\mathbf{d}) \setminus \{ (\mathbf{u}, \mathbf{u}) \mid \pi_\mathcal{A}(\mathbf{u})  = \mathbf{d} \} \subseteq \mathcal C_\mathcal A$.

\noindent \framebox{(b) $\Rightarrow$ (c).}
Let $(\mathbf u,\mathbf v)\in \mathrm{Gr}_\mathcal A$. Clearly, $\mathrm{Gr}_\mathcal A \setminus\{(\mathbf v,\mathbf u)\}$ is also a system of generators of $\equiv_\mathcal A$ as a congruence. Let $\mathcal A_1=\{\mathbf a_i \ \mid\ i\in \mathrm{supp}(\mathbf u)\}$, and $\mathcal A_2=\mathcal A\setminus \mathcal A_1$. Set $S_1=\langle \mathcal A_1\rangle$ and $S_2=\langle\mathcal A_2\rangle$. Then $\mathrm{Gr}_\mathcal A\setminus\{(\mathbf v,\mathbf u)\}$ is of the form $\sigma\cup\{(\mathbf u,\mathbf v)\}$ with no pair in $\sigma$ having common support with $\mathbf u$ (due to Remark \ref{soportes disjuntos}). Hence \cite[Theorem 1.4]{Ro97}, implies that $S$ is the gluing of the (free) semigroup $S_1$ and $S_2$.

\noindent \framebox{(c) $\Rightarrow$ (a)} It follows easily from Proposition \ref{Th10.uniquely}. 
\end{proof}

\begin{example}\label{example_K}
Let $A$ be the incidence matrix of the complete graph K4, that is to say, $$A = \left(\begin{array}{cccccc} 1 & 1 & 1 & 0 & 0 & 0 \\ 1 & 0 & 0 & 1 & 1 & 0 \\ 0 & 1 & 0 & 1 & 0 & 1 \\ 0 & 0 & 1 & 0 & 1 & 1 \end{array}\right)$$ and let $S$ be the subsemigroup of $\mathbb{N}^ 4$ generated by the columns of $A.$ It is easy to see that $S$ is the gluing of $S_1, S_2$ and $S_3$ by $\mathbf{d} = (1,1,1,1),$ where $S_1$ is the free semigroup generated by the first and sixth columns, $S_2$ is the free semigroup generated by the second and fifth columns and $S_3$  is the free semigroup generated by the third and fourth columns. So, by Theorem \ref{Th.Main}, $S$ has a unique Betti element, in this case, $\mathrm{Betti}(S) = \{ (1,1,1,1) \}.$
\end{example}

There are important families of affine semigroups that satisfies the equality $\mathcal{C}_\mathcal{A} = \mathrm{Gr}_\mathcal{A};$ for instance, affine semigroups generated by the columns of $r \times p-$unimodular matrices have this property (see, \cite[Proposition 8.11]{Sturmfels}). Nevertheless, the condition $\mathcal{C}_\mathcal{A} = \mathrm{Gr}_\mathcal{A}$ is not sufficient for a semigroup to have a unique Betti element as the following example shows:

\begin{example}
The incidence matrix $A$ of the graph:

\begin{tikzpicture}[x=2.0cm,y=2.0cm]
\clip(-3,-0.5) rectangle (5,1.5);
\draw (1,0)-- (0,1);
\draw (0,0)-- (1,1);
\draw (2,0)-- (1,1);
\draw (1,0)-- (2,1);
\draw (0,1)-- (0,0);
\draw (1,1)-- (1,0);
\draw (2,1)-- (2,0);
\fill  (0,1) circle (2.0pt);
\draw (0.1,1.1) node {$1$};
\fill  (1,1) circle (2.0pt);
\draw (1.1,1.1) node {$2$};
\fill  (2,1) circle (2.0pt);
\draw (2.1,1.1) node {$3$};
\fill (0,0) circle (2.0pt);
\draw (-0.1,-0.1) node {$6$};
\fill  (1,0) circle (2.0pt);
\draw (0.9,-0.1) node {$5$};
\fill  (2,0) circle (2.0pt);
\draw (1.9,-0.1) node {$4$};
\end{tikzpicture}

\noindent is a $6 \times 7$-unimodular matrix. If $S$ is the subsemigroup of $\mathbb{N}^6$ generated by the columns of $A,$ then $\mathrm{Betti}(S) = \{ (1,1,0,0,1,1), (0,1,1,1,1,0)\}$ (see \cite{KaTh} for the details).
\end{example}

From the results in \cite{Ka11}, it follows that the only affine semigroup defined by the incidence matrix of a graph having a unique Betti element is the semigroup associated to K4 (see Example \ref{example_K}).

\begin{corollary}
Let $\mathbf{d} \in S$ be nonzero. If $\mathrm{Betti}(S) = \{ \mathbf{d} \},$ then 
\begin{itemize}
\item[(a)] $\mathcal{C}_\mathcal{A}$ is a presentation of $S.$
\item[(b)] $S$ is the gluing of $\# \mathsf{Z}(\mathbf{d})$ free affine semigroups.
\item[(c)] $S$ is a complete intersection.
\end{itemize}
\end{corollary}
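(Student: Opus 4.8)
The plan is to read off all three items from Theorem \ref{Th.Main} together with the gluing results of this section, handling them in the order (a), (b), (c), since (c) will be a consequence of (b).

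For part (a) the argument is immediate. Since $\mathrm{Betti}(S)=\{\mathbf d\}$, the equivalence (a)$\Leftrightarrow$(b) of Theorem \ref{Th.Main} yields $\mathcal C_\mathcal A=\mathrm{Gr}_\mathcal A$. As recalled in the subsection on primitive elements, the Graver basis $\mathrm{Gr}_\mathcal A$ generates $\equiv_\mathcal A$ as a congruence; hence so does $\mathcal C_\mathcal A$, which is precisely the assertion that $\mathcal C_\mathcal A$ is a presentation of $S$.

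For part (b) I would induct on $s:=\#\mathsf Z(\mathbf d)$. Write $\mathsf Z(\mathbf d)=\{\mathbf v_1,\ldots,\mathbf v_s\}$; note $s\geq 2$ since a Betti element has at least two factorizations, and by Proposition \ref{prop-unificadora}(b) the $\mathbf v_i$ have pairwise disjoint supports. Applying the construction from the proof of Theorem \ref{Th.Main}((b)$\Rightarrow$(c)) to the circuit $(\mathbf v_1,\mathbf v_2)$, set $\mathcal A_1=\{\mathbf a_i\mid i\in\mathrm{supp}(\mathbf v_1)\}$ and $\mathcal A_2=\mathcal A\setminus\mathcal A_1$, so that $S$ is the gluing by $\mathbf d$ of the free semigroup $S_1=\langle\mathcal A_1\rangle$ and $S_2=\langle\mathcal A_2\rangle$. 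The crucial step is to count the factorizations of $\mathbf d$ inside $S_2$: a factorization of $\mathbf d$ lying in $S_2$ is exactly one whose support avoids $\mathcal A_1=\mathrm{supp}(\mathbf v_1)$, and by disjointness of supports these are precisely $\mathbf v_2,\ldots,\mathbf v_s$, while $\mathbf v_1$ does not survive. Thus $\#\mathsf Z_{S_2}(\mathbf d)=s-1$. By Proposition \ref{Th10.uniquely} we have $\mathrm{Betti}(S_2)\subseteq\{\mathbf d\}$; if $s-1\geq 2$ then $\mathbf d$ still has at least two factorizations in $S_2$, which forces $\mathrm{Betti}(S_2)=\{\mathbf d\}$ and the induction hypothesis expresses $S_2$ as a gluing of $s-1$ free semigroups, whereas if $s-1=1$ then $\mathbf d$ has a unique factorization in $S_2$, so $\mathrm{Betti}(S_2)=\varnothing$ and $S_2$ is itself free. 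In either case $S$ is exhibited as a gluing of $s=\#\mathsf Z(\mathbf d)$ free affine semigroups.

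Part (c) then follows by combining (b) with Corollary \ref{Cor1.4.Ro97} and the fact, recalled in Section \ref{sec:unico-betti}, that every free affine semigroup is a complete intersection: iterating Corollary \ref{Cor1.4.Ro97} along the chain of gluings produced in (b) shows that $S$ is a complete intersection (equivalently, Proposition \ref{Th1.4.Ro97} gives $\nu(S)=s-1$ directly). The part I expect to require the most care is the induction in (b): one must verify that peeling off a single petal decreases $\#\mathsf Z(\mathbf d)$ by exactly one and keeps $\mathrm{Betti}\subseteq\{\mathbf d\}$, so that the hypotheses of the corollary regenerate at each stage. This rests entirely on the disjoint-support property of Proposition \ref{prop-unificadora}(b) (and Remark \ref{soportes disjuntos}), together with the observation that any generators of $\mathcal A$ not occurring in any $\mathbf v_i$ are harmlessly absorbed into the final free petal without affecting the count.
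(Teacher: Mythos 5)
Your proposal is correct and follows essentially the same route as the paper: part (a) via the equality $\mathcal C_\mathcal A=\mathrm{Gr}_\mathcal A$ from Theorem \ref{Th.Main}, part (b) by recursively peeling off free petals using the implication (b)$\Rightarrow$(c) of that theorem together with Propositions \ref{prop-unificadora}(b) and \ref{Th10.uniquely}, and part (c) from Corollary \ref{Cor1.4.Ro97}. Your induction in (b) merely spells out in detail the recursion that the paper states in one line (checking that $\#\mathsf Z_{S_2}(\mathbf d)=s-1$ and that $\mathrm{Betti}(S_2)\subseteq\{\mathbf d\}$ regenerates the hypotheses), which is a welcome but not substantively different elaboration.
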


\begin{proof}
(a) It follows directly from the definition of $\mathrm{Betti}(S)$ and the implication (a) $\Rightarrow$ (b) in Theorem \ref{Th.Main}.

\noindent (b) By using recursively the implication (b) $\Rightarrow$ (c) in Theorem \ref{Th.Main}, we obtain that $S$ is a gluing of free semigroups. More precisely, if $\mathsf{Z}(\mathbf{d}) = \{\mathbf{u}_1, \ldots, \mathbf{u}_n\} \subset \mathbb{N}^r,$ then $S$ is the gluing of $S_1, \ldots, S_n,$ where $S_i$ is the semigroup generated by $\{ \mathbf{a}_j \mid j \in \mathrm{supp}(\mathbf{u}_i) \},\ i = 1, \ldots, n-1,$ and $S_n$ the semigroup generated by $\{ \mathbf{a}_j \mid j \in \mathrm{supp}(\mathbf{u}_n) \}\cup \{ \mathbf{a}_k \mid k \not\in \mathrm{supp}(\mathbf{u}_i),\ i = 1, \ldots, n\}$.

\noindent (c) Our claim follows from part (b), by Corollary \ref{Cor1.4.Ro97}.
\end{proof}

\begin{remark}
 From Proposition \ref{prop-unificadora} (b) and \cite[Chapter 9]{RGS99} it also follows that if $\mathsf Z(\mathbf d)=\{\mathbf v_1,\ldots, \mathbf v_s\}$, then every minimal presentation of $S$ is of the form $\{ (\mathbf v_i,\mathbf v_j) ~|~  (i,j)\in I\}$, where $I$ is the set of edges of any spanning tree of the complete graph on the vertices $\{1,\ldots, d\}$.
\end{remark}

\begin{example}\label{exNS}\textbf{Numerical semigroups with a single Betti element.}
A numerical semigroup is a submonoid of $\mathbb N$ with finite complement in $\mathbb N$. It follows that $\gcd(S)=1$. Any submonoid $M$ of $\mathbb N$ is isomorphic to a numerical semigroup, since it suffices to divide its elements by the greatest common divisor of $M$ (see \cite[Proposition 2.2]{libro-ns}). Every numerical semigroup has a unique minimal system of generators, which is always finite (see for instance \cite[Theorem 2.7]{libro-ns}). Thus any numerical semigroup is an affine semigroup.

Let $S$ be a numerical semigroup minimally generated by $\{n_1,\ldots,n_p\}$, $p\ge 2$. Set 
\[c_i=\min\{ k\in \mathbb N\setminus\{0\}~|~ k n_i \in \langle n_1,\ldots, n_{i-1},n_{i+1},\ldots,n_p \rangle \}.\]

Notice that \[\mathcal C_\mathcal A=\left\{\left(\frac{n_i}{\gcd(n_i,n_j)} \mathbf e_j, \frac{ n_j}{\gcd(n_i,n_j)} \mathbf e_i\right)~\big\vert~ i\neq j\right\}.\] 
Moreover if $c_j n_j=\sum_{i\neq j} r_{j_i}n_i$ then the minimality of $c_i$ implies $(c_j \mathbf e_j,  \sum_{i\neq j} r_{j_i}\mathbf e_i)\in \mathrm{Gr}_\mathcal A$ and $c_i \leq \frac{n_j}{\gcd(n_i,n_j)}$ for every $j \neq i.$

The following conditions are equivalent.
\begin{itemize}
\item[1.] The set $\mathrm{Betti}(S)$ is a singleton.
\item[2.] There exists $k_1, \ldots, k_p$ pairwise relatively prime integers greater than one, such that $n_i=\prod_{j\neq i} k_j$ for all $i\in\{1,\ldots,p\}$.
\end{itemize}
Furthermore, if this is the case, $k_i = c_i,\ i = 1, \ldots, p$ and $\mathrm{Betti}(S)=\{c_1 a_1\}=\{c_in_i ~|~ i\in\{1,\ldots, p\}\}$.

\noindent \framebox{1 $\Rightarrow$ 2.} Let $\mathrm{Betti}(S)=\{d\}$. By Theorem \ref{Th.Main} (a) $\Rightarrow$ (b), we deduce that  $c_1 n_1=\cdots = c_p n_p =d.$ Moreover, since $\mathrm{gcd}(c_i, c_j) = 1$ and $c_i$ divides $n_j$ for every $i \neq j,$ we conclude that $n_j = \prod_{i \neq j} c_j.$

\noindent \framebox{2 $\Rightarrow$ 1.} We use induction on $p$. The case $p=2$ follows trivially. Observe that $k_1=\gcd\{n_2,\ldots,n_p\}$. If we set $S_1=\langle n_1\rangle$ and $S_2=\langle n_2,\ldots,n_p\rangle$, then $\mathrm G(S_1)\cap \mathrm G(S_2)=\mathbb{Z} n_1 \cap \mathbb{Z} k_1 = \mathbb Z k_1n_1$. We obtain that $S$ is the gluing of the monoids $S_1$ and $S_2$.  By induction hypothesis,  $S'=\langle \frac{n_2}{k_1},\ldots,\frac{n_p}{k_1}\rangle$ has a unique Betti element (observe that if $d'$ is the only Betti element of $S'$, then $\mathrm{Betti}(S_2)=\{k_1d'\}$). Define $c_i'=\min\{k\in \mathbb N\setminus\{0\}~|~ k\frac{n_i}{k_1}\in \langle \{\frac{n_2}{k_1},\ldots,\frac{n_p}{k_{i-1}} ,\frac{n_p}{k_{i+1}}, \ldots,\frac{n_p}{k_p}\rangle\}$. By Theorem \ref{Th.Main} (a) $\Rightarrow$ (b), we obtain $c_2'\frac{n_2}{k_1}=\cdots = c_p'\frac{n_p}{k_1}$. Hence $c_2'n_2=\cdots= c_p'n_p$. Notice also that $k_2n_2=\cdots=k_pn_p$. By the definition of $c_2'$,  $c_2'\le k_2$. Besides, $c_3'=c_2'\frac{k_3}{k_2}$, and since $\gcd\{k_2,k_3\}=1$, we deduce that $k_2$ divides $c_2'$. Hence $c_2'=k_2$. In the same way, one proves that $c_i'=k_i$ for $i\in\{3,\ldots,p\}$. Now by Proposition \ref{Th10.uniquely}, we have that $\mathrm{Betti}(S)=\mathrm{Betti}(S_1)\cup \mathrm{Betti}(S_2)\cup \{k_1n_1\}=\varnothing\cup \{k_1 k_2\frac{n_2}{k_1}\}\cup\{ k_1 n_1\}=\{k_1 n_1\}$. 

In the context of toric ideals, a proof of the implication 2 $\Rightarrow$ 1 can be found in \cite{KaOj}.

As as consequence of the equivalence 1 $\Leftrightarrow$ 2, we can construct as many examples of numerical having just one Betti element as we like: it suffices to take $p$ relatively prime positive integers, and then take the numerical semigroup (minimally) generated by the products of all $p-1$ subsets with $p-1$ elements. For instance, if $c_1 = 7, c_2 = 5, c_3 = 3$ and $c_4 = 2,$ the numerical semigroup $S$ generated by $n_1 = c_2 c_3 c_4 = 30, n_2 = c_1 c_3 c_4 = 42, n_3 = c_1 c_2 c_4 = 70$ and $n_4 = c_1 c_2 c_3 = 105$ has only one Betti element, $d = 210.$ A minimal presentation for $S$ is $\big\{ (7 \mathbf{e}_1, 5 \mathbf{e}_2), 
(7 \mathbf{e}_1, 3 \mathbf{e}_3), (7 \mathbf{e}_1, 2 \mathbf{e}_2) \big\}.$
\end{example}

\section{Non-unique factorization invariants}

We next compute some non-unique factorization invariants for affine semigroups with a single Betti element. We recall their definitions here, though in same cases we present a simplified version that fits in our scope. The interested reader in these invariants is referred to \cite{GHKb}.

Throughout this section, $S$ will denote an affine semigroup minimally generated by $\mathcal{A} = \{\mathbf{a}_1, \ldots, \mathbf{a}_p\} \subset \mathbb{N}^r.$

Given $\mathbf{a} \in S$ and $\mathbf{u} = (u_1, \ldots, u_p)  \in \mathsf{Z}(\mathbf{a}),$ set $\vert \mathbf{u} \vert = u_1 + \cdots + u_p$ and define the \textbf{set of lengths} of $\mathbf{a}$ as $\mathsf{L}(\mathbf{a}) = \big\{ \vert \mathbf{u} \vert : \mathbf{u} \in  \mathsf{Z}(\mathbf{a}) \big\}.$

\begin{lemma}\label{lemma-key-fact}
Let $\mathbf{a} \in S$. If $\mathrm{Betti}(S) = \{\mathbf{d}\},$ then there exist $a$ and $b \in \mathbb{N}$ such that for every $\mathbf{u} \in \mathsf{Z}(\mathbf{a}),$  $$ a \min(\mathsf L (\mathbf{d})) + b \leq \vert \mathbf{u} \vert \leq a \max(\mathsf L (\mathbf{d})) + b.$$
\end{lemma}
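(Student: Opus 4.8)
The plan is to reduce the entire statement to Proposition \ref{prop-unificadora}(c), which already packages the relevant arithmetic structure of the factorizations of $\mathbf{a}$. Write $\mathsf{Z}(\mathbf{d}) = \{\mathbf{v}_1, \ldots, \mathbf{v}_s\}$. First I would invoke that proposition to obtain $a \in \mathbb{N}$, $\mathbf{b} \in S$ and $\mathbf{w} \in \mathbb{N}^p$ with $\mathsf{Z}(\mathbf{b}) = \{\mathbf{w}\}$, such that every $\mathbf{u} \in \mathsf{Z}(\mathbf{a})$ can be written $\mathbf{u} = \sum_{i=1}^s \alpha_i \mathbf{v}_i + \mathbf{w}$ with $\alpha_i \in \mathbb{N}$ and $\sum_{i=1}^s \alpha_i = a$. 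The decisive point, also established in that proposition, is that $a$ and $\mathbf{w}$ depend only on $\mathbf{a}$ and not on the chosen factorization $\mathbf{u}$; this is exactly what will let the same two constants work simultaneously for all $\mathbf{u} \in \mathsf{Z}(\mathbf{a})$.

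Next I would apply the length map $\vert\,\cdot\,\vert$, which is additive (it is the sum of the coordinates, hence a linear functional on $\mathbb{N}^p$), to this decomposition, so that $\vert \mathbf{u} \vert = \sum_{i=1}^s \alpha_i \vert \mathbf{v}_i \vert + \vert \mathbf{w} \vert$. Since each $\mathbf{v}_i$ is a factorization of $\mathbf{d}$, we have $\vert \mathbf{v}_i \vert \in \mathsf{L}(\mathbf{d})$, whence $\min(\mathsf L(\mathbf{d})) \leq \vert \mathbf{v}_i \vert \leq \max(\mathsf L(\mathbf{d}))$ for every $i$. Multiplying by $\alpha_i$, summing over $i$, and using $\sum_{i=1}^s \alpha_i = a$, I obtain $a \min(\mathsf L(\mathbf{d})) \leq \sum_{i=1}^s \alpha_i \vert \mathbf{v}_i \vert \leq a \max(\mathsf L(\mathbf{d}))$.

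Finally, setting $b = \vert \mathbf{w} \vert$ and adding $b$ throughout yields the claimed inequalities; because $a$ and $b = \vert \mathbf{w} \vert$ were fixed once $\mathbf{a}$ was chosen, the bounds hold uniformly over all $\mathbf{u} \in \mathsf{Z}(\mathbf{a})$, which is precisely the assertion. There is essentially no hard part once Proposition \ref{prop-unificadora}(c) is available: the only thing that genuinely needs care is verifying that the constants are factorization-independent, but that is exactly the content of the uniqueness clause proved at the end of that proposition. I would also note that Proposition \ref{prop-unificadora}(b), giving disjoint supports of the $\mathbf{v}_i$, is not required for the length estimate, since additivity of $\vert\,\cdot\,\vert$ already gives $\vert \sum_{i=1}^s \alpha_i \mathbf{v}_i \vert = \sum_{i=1}^s \alpha_i \vert \mathbf{v}_i \vert$ irrespective of supports.
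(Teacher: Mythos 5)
Your proof is correct and follows exactly the paper's own argument: both invoke Proposition \ref{prop-unificadora}(c) to write $\mathbf{u}=\sum_{i=1}^s\alpha_i\mathbf{v}_i+\mathbf{w}$ with $a=\sum_i\alpha_i$ and $b=\vert\mathbf{w}\vert$ independent of the chosen factorization, and then bound $\vert\mathbf{u}\vert$ by additivity of the length. You merely spell out the intermediate inequalities that the paper leaves implicit.
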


\begin{proof}
By Proposition \ref{prop-unificadora} (c), $\mathbf{u} = \sum_{i=1}^s \alpha_i \mathbf{v}_i + \mathbf{w}$ for some $\alpha_i \in \mathbb{N}$ and $\mathbf{w} \in \mathbb{N}^p.$ So, it suffices to take $a = \sum_{i=1}^s \alpha_i$ and $b = \vert \mathbf{w} \vert$ (recall that $a$ and  $\mathbf w$ depend exclusively on $\mathbf a$). 
\end{proof}

Notice that, with the above notation, if $j \not\in \mathrm{supp}(\mathbf{w})$ and $\mathbf{a} - \mathbf{a}_j \in S,$ there exists $\mathbf{v}_k$ such that $i \in \mathrm{supp}(\mathbf{v}_k).$ Otherwise, $\mathbf{b} = \pi_\mathcal{A}(\mathbf{w})$ will have more than one factorization.

\begin{definition}
The \textbf{elasticity} of $\mathbf{a} \in S$ is defined as $\rho(\mathbf{a})=\frac{\max(\mathsf L(\mathbf{a}))}{\min(\mathsf L(\mathbf{a}))}$, and the  \textbf{elasticity} of $S$ is \[\rho(S)=\sup\left\{\rho(\mathbf{a}) ~|~ s\in S\setminus\{0\}\right\}.\]
\end{definition}

The following result can be deduced from \cite[Theorem 15]{atomicos} and Theorem \ref{Th.Main}, we include an alternative (and straightforward) proof for the single Betti element setting.

\begin{corollary}
If $\mathrm{Betti}(S) = \{\mathbf{d}\},$ then $\rho(S) = \frac{\max(\mathsf L(\mathbf{d}))}{\min(\mathsf L(\mathbf{d}))}.$
\end{corollary}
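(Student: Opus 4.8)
The plan is to prove the two inequalities $\rho(S)\le \frac{\max(\mathsf L(\mathbf d))}{\min(\mathsf L(\mathbf d))}$ and $\rho(S)\ge \frac{\max(\mathsf L(\mathbf d))}{\min(\mathsf L(\mathbf d))}$ separately, the first being the substantive direction and the second being essentially immediate.

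For the upper bound, fix any nonzero $\mathbf a\in S$ and apply Lemma \ref{lemma-key-fact}: there exist $a,b\in\mathbb N$ (depending only on $\mathbf a$) with $a\min(\mathsf L(\mathbf d))+b\le |\mathbf u|\le a\max(\mathsf L(\mathbf d))+b$ for every $\mathbf u\in\mathsf Z(\mathbf a)$. Taking the extreme factorizations gives $\min(\mathsf L(\mathbf a))\ge a\min(\mathsf L(\mathbf d))+b$ and $\max(\mathsf L(\mathbf a))\le a\max(\mathsf L(\mathbf d))+b$, so
\[
\rho(\mathbf a)=\frac{\max(\mathsf L(\mathbf a))}{\min(\mathsf L(\mathbf a))}\le \frac{a\max(\mathsf L(\mathbf d))+b}{a\min(\mathsf L(\mathbf d))+b}.
\]
I would then observe that the right-hand side is a quantity of the form $\frac{a M+b}{a m +b}$ with $M=\max(\mathsf L(\mathbf d))\ge m=\min(\mathsf L(\mathbf d))$, and such a ratio is maximized (over $a,b\ge 0$, not both zero) at $b=0$, giving the bound $M/m$. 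This monotonicity is the one genuinely nontrivial step: I expect to argue it by noting that adding the same $b$ to numerator and denominator of a fraction $\ge 1$ only decreases it, hence the largest value occurs when $b=0$; equivalently $\frac{aM+b}{am+b}\le \frac{M}{m}$ reduces to $b\,m\le b\,M$, which holds since $m\le M$. Taking the supremum over $\mathbf a$ yields $\rho(S)\le \frac{\max(\mathsf L(\mathbf d))}{\min(\mathsf L(\mathbf d))}$.

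For the lower bound I would simply exhibit $\mathbf d$ itself as a witness: since $\mathbf d\in\mathrm{Betti}(S)$ it has at least two factorizations, and $\rho(\mathbf d)=\frac{\max(\mathsf L(\mathbf d))}{\min(\mathsf L(\mathbf d))}$ by definition, so $\rho(S)=\sup_{\mathbf a}\rho(\mathbf a)\ge \rho(\mathbf d)=\frac{\max(\mathsf L(\mathbf d))}{\min(\mathsf L(\mathbf d))}$. Combining the two inequalities gives the stated equality, and in particular the supremum defining $\rho(S)$ is attained at $\mathbf d$. The main obstacle, as noted, is the clean justification of the monotonicity of $\frac{aM+b}{am+b}$ in $b$; everything else is a direct appeal to Lemma \ref{lemma-key-fact} and the definitions.
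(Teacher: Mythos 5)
Your proposal is correct and follows essentially the same route as the paper: both apply Lemma \ref{lemma-key-fact} to bound $\rho(\mathbf a)$ by $\frac{a\max(\mathsf L(\mathbf d))+b}{a\min(\mathsf L(\mathbf d))+b}\le \frac{\max(\mathsf L(\mathbf d))}{\min(\mathsf L(\mathbf d))}$ and conclude by noting the bound is attained at $\mathbf d$. You merely spell out the monotonicity in $b$ and the lower bound, which the paper leaves implicit in ``and the claim follows.''
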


\begin{proof}
Let $\mathbf{a} \in S.$ By Lemma \ref{lemma-key-fact}, there exist $a$ and $b \in \mathbb{N}$ such that $\max(\mathsf L(\mathbf{a})) \leq a\, \max(\mathsf L(\mathbf{d})) + b$ and  $\min(\mathsf L(\mathbf{a})) \geq a\, \min(\mathsf L(\mathbf{d})) + b.$ Therefore, $$\rho(\mathbf{a})=\frac{\max(\mathsf L(\mathbf{a}))}{\min(\mathsf L(\mathbf{a}))} \leq \frac{a\, \max(\mathsf L(\mathbf{d})) + b}{a\, \min(\mathsf L(\mathbf{d})) + b} \leq \frac{\max(\mathsf L(\mathbf{d}))}{\min(\mathsf L(\mathbf{d}))}$$ and the claim follows.
\end{proof}

Let $\mathbf{a} \in S.$ Since $\mathsf{L}(\mathbf{a})$ is finite, we can write $\mathsf{L}(\mathbf{a}) = \{n_1 < \cdots < n_s\}.$ With this notation, the \textbf{delta set} of $\mathbf{a}$ is $\Delta(\mathbf{a}) := \big\{ n_i - n_{i-1} \mid 2 \leq i \leq s\}$ and the \textbf{delta set} of $S$ is the union of the delta sets of the elements in $S,$ in symbols, $$\Delta(S) := \bigcup_{\mathbf{a} \in S \setminus \{0\}} \Delta(\mathbf{a}).$$

It is known that the maximum of the delta set of $S$ is reached in the Betti elements of $S$ (see \cite[Theorem 2.5]{delta}); therefore, if $\mathrm{Betti}(S) = \{\mathbf{d}\},$ then $$\max(\Delta(S)) = \max(\Delta(\mathbf{d})).$$

\medskip
For $\mathbf{u} = (u_1, \ldots, u_p)$ and $\mathbf{v} = (v_1, \ldots, v_p) \in \mathbb{N}^p,$ we define $\mathbf{u}\wedge  \mathbf{v} = \big( \min(u_1,v_1), \ldots, \min(u_p,v_p) \big).$ Notice that $\mathbf{u} - \mathbf{u}\wedge \mathbf{v}$ and $\mathbf{v} - \mathbf{u}\wedge \mathbf{v}$ lie in $\mathbb{N}^p,$ and this is the largest element in $\mathbb{N}^p$ fulfilling this condition with respect to the usual partial ordering on $\mathbb{N}^p.$

Let $\mathsf{d} : \mathbb{N}^p \times \mathbb{N}^p \to \mathbb{N}$ be such that $$\mathsf{d}(\mathbf{u}, \mathbf{v}) = \max \big\{ \vert \mathbf{u} - \mathbf{u}\wedge \mathbf{v} \vert,  \vert \mathbf{v} - \mathbf{u}\wedge \mathbf{v} \vert \big\}.$$ It is not difficult to see that $\mathsf{d}$ is actually a metric in the topological sense (see \cite[Proposition 1.2.5]{GHKb}). 

\begin{definition}\label{Def.catenary}
The \textbf{catenary degree} of $S, \mathsf{c}(S),$ is the minimum $N \in \mathbb{N}$ such that for any $s \in S$ and for any $\mathbf{u}, \mathbf{v} \in \mathsf{Z}(\mathbf{a})$ there exists a sequence $\mathbf{u}_0, \ldots, \mathbf{u}_k \in \mathsf{Z}(\mathbf{a})$ such that 
\begin{itemize}
\item[(a)] $\mathbf{u}_0 = \mathbf{u}$ and $\mathbf{u}_k = \mathbf{v}$, 
\item[(b)] $\mathsf{d}(\mathbf{u}_{i-1}, \mathbf{u}_i) \leq N,\ i \in\{ 1, \ldots, k\}$.
\end{itemize}
\end{definition}

\begin{definition}
The \textbf{$\omega$-primality} of $S$, $\omega(S)$, is the minimum of all $N\in \mathbb N\cup\{\infty\}$ such that for every minimal generator $\mathbf{a}_j$, if $\sum_{i\in I} \mathbf{b}_i-\mathbf{a}_j\in S$, with $\mathbf{b}_i\in S$ for all $i\in I$, then there exists $\Omega\subseteq I$ with $\# \Omega \leq N$ such that $\sum_{i\in \Omega} \mathbf{b}_i-\mathbf{a}_j\in S$.
\end{definition}

\begin{definition}
The \textbf{tame degree} of $S$, $\mathsf t(S)$, is the minimum of all $N\in \mathbb N\cup\{\infty\}$ such that for all $\mathbf{a} \in S$, $\mathbf{u}\in {\mathsf Z}(\mathbf{a})$ and every minimal generator $\mathbf{a}_i$ such that $\mathbf{a}-\mathbf{a}_i\in S$, there exists $\mathbf{u}' \in {\mathsf Z}(\mathbf{a})$ such that $i \in \mathrm{supp}(\mathbf{u}')$ and $\mathsf{d}(\mathbf{u},\mathbf{u}')\leq N$.
\end{definition}

Next we prove that for affine semigroups with a single Betti element, the catenary and tame degrees coincide with the $\omega$-primality as occurs in the generic case (\cite{BGSG}). We can also give an explicit value for these invariants.

\begin{theorem}\label{Th.Main2}
Let $S$ be an affine semigroup minimally generated by $\mathcal{A} = \{\mathbf{a}_1,\ldots, \mathbf{a}_p\} \subset \mathbb{N}^r.$ If $\mathrm{Betti}(S) = \{\mathbf{d}\},$ then $$\mathsf{c}(S) = \omega(S) = \mathsf{t}(S)=  \max(\mathsf{L}(\mathbf{d})).$$
\end{theorem}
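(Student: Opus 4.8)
The plan is to exploit the parametrization of factorizations furnished by Proposition~\ref{prop-unificadora}(c). Write $\mathsf Z(\mathbf d)=\{\mathbf v_1,\dots,\mathbf v_s\}$ and put $\ell_i=|\mathbf v_i|$, so that $\max(\mathsf L(\mathbf d))=\max_i\ell_i=:M$; note $s\ge 2$ and, by minimality of $\mathcal{A}$, every $\ell_i\ge 2$, and by Remark~\ref{soportes disjuntos} the $\mathbf v_i$ have pairwise disjoint supports. First I would record that for fixed $\mathbf a$ the set $\mathsf Z(\mathbf a)$ equals $\{\sum_i\alpha_i\mathbf v_i+\mathbf w : \alpha_i\in\mathbb N,\ \sum_i\alpha_i=a\}$ with $a,\mathbf w$ depending only on $\mathbf a$, so factorizations correspond to compositions of $a$ into $s$ parts and $|\mathbf u|=\sum_i\alpha_i\ell_i+|\mathbf w|$. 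Using disjointness of supports, a direct computation gives, for $\mathbf u=\sum_i\alpha_i\mathbf v_i+\mathbf w$ and $\mathbf u'=\sum_i\alpha_i'\mathbf v_i+\mathbf w$, that $\mathsf d(\mathbf u,\mathbf u')=\max\{\sum_{\alpha_i>\alpha_i'}(\alpha_i-\alpha_i')\ell_i,\ \sum_{\alpha_i'>\alpha_i}(\alpha_i'-\alpha_i)\ell_i\}$; in particular a single unit transfer from petal $i$ to petal $j$ costs $\max(\ell_i,\ell_j)\le M$.

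For the catenary degree I would prove both bounds. Since the factorizations of $\mathbf d$ are exactly $\mathbf v_1,\dots,\mathbf v_s$, any two of which are at distance $\max(\ell_i,\ell_j)$, and leaving the factorization of maximal length $M$ costs a step of size $M$, we get $\mathsf c(\mathbf d)=M$, hence $\mathsf c(S)\ge M$. Conversely, any two factorizations of an arbitrary $\mathbf a$ are joined by successive unit transfers, each of distance $\le M$, so $\mathsf c(\mathbf a)\le M$ and $\mathsf c(S)=M$. The tame degree is handled in the same spirit. For $\mathsf t(S)\le M$: given $\mathbf u=\sum_i\alpha_i\mathbf v_i+\mathbf w$ and a generator $\mathbf a_l$ with $\mathbf a-\mathbf a_l\in S$, take $\mathbf u'=\mathbf u$ if $l\in\mathrm{supp}(\mathbf u)$; otherwise $l\notin\mathrm{supp}(\mathbf w)$, so the remark following Lemma~\ref{lemma-key-fact} together with Proposition~\ref{prop-unificadora}(a) places $l$ in some $\mathbf v_k$ with $a\ge 1$, and replacing one active petal unit by $\mathbf v_k$ yields $\mathbf u'$ with $l\in\mathrm{supp}(\mathbf u')$ at distance $\le M$. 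For $\mathsf t(S)\ge M$, taking $\mathbf a=\mathbf d$, $\mathbf u$ a factorization of length $M$, and $\mathbf a_l$ with $l$ in a different petal forces the unique admissible $\mathbf u'$ to be that petal's $\mathbf v_k$, at distance $M$.

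The main obstacle is the $\omega$-invariant. Here I would first establish the standard reformulation $\omega(\mathbf a_j)=\max\{|\mathbf x| : \mathbf x \text{ minimal in } T_j\}$, where $T_j=\{\mathbf x\in\mathbb N^p : \pi_\mathcal{A}(\mathbf x)-\mathbf a_j\in S\}$ is an up-set (so its minimal set is finite), proving it from the definition by the usual refine-to-atoms argument for the upper bound and a greedy cover for the lower bound, or invoking \cite{GHKb}. The heart of the matter is to identify these minimal elements: I claim they are exactly $\{\mathbf e_j\}\cup\{\mathbf v_m : m\ne k\}$ when $j\in\mathrm{supp}(\mathbf v_k)$, and just $\{\mathbf e_j\}$ when $j$ lies in no petal. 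For the inclusion ``$\supseteq$'', if some $\mathbf x<\mathbf v_m$ were in $T_j$ then $\pi_\mathcal{A}(\mathbf x)$ would have two factorizations (one through $\mathbf a_j$), while $\mathbf d-\pi_\mathcal{A}(\mathbf x)\in S\setminus\{0\}$; Proposition~\ref{prop-unificadora}(a) would then give both $\pi_\mathcal{A}(\mathbf x)-\mathbf d\in S$ and $\mathbf d-\pi_\mathcal{A}(\mathbf x)\in S\setminus\{0\}$, impossible.

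For ``$\subseteq$'', a minimal $\mathbf x\ne\mathbf e_j$ satisfies $j\notin\mathrm{supp}(\mathbf x)$, so $\pi_\mathcal{A}(\mathbf x)$ has two factorizations and $a\ge 1$. I then rule out $a\ge 2$ and $\mathbf w\ne 0$ by producing a strictly smaller element of $T_j$: drop one petal unit $\mathbf v_m$ (resp. one generator appearing in $\mathbf w$) and check, by reintroducing $\mathbf a_j$ through petal $k$, that the reduced element still lies in $T_j$, contradicting minimality. For a generator lying in no petal, the same bookkeeping forces $j\in\mathrm{supp}(\mathbf w)\subseteq\mathrm{supp}(\mathbf x)$, a contradiction, so only $\mathbf e_j$ survives and $\omega(\mathbf a_j)=1$. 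This yields $\omega(\mathbf a_j)=\max_{m\ne k}\ell_m\le M$, with equality for any $j$ in a non-maximal petal (which exists since $s\ge 2$), so $\omega(S)=M$, completing $\mathsf c(S)=\omega(S)=\mathsf t(S)=M$. The delicate point throughout is verifying that a reduced candidate remains in $T_j$, which is exactly where Proposition~\ref{prop-unificadora}(a),(c) and the disjointness of the supports of the $\mathbf v_i$ are indispensable.
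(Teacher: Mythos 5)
Your proposal is correct, and it reaches the conclusion by a genuinely more self-contained (and longer) route than the paper. The paper's proof is a sandwich: it imports the chain $\mathsf c(S)\le\omega(S)\le\mathsf t(S)$ from \cite{Ge-Ka10a} and \cite{Ge-Ha08a}, and then only establishes the two outer bounds --- $\max(\mathsf L(\mathbf d))\le\mathsf c(S)$, because the factorizations of $\mathbf d$ have pairwise disjoint supports so that any chain leaving a factorization of maximal length has a first step of cost exactly $\max(\mathsf L(\mathbf d))$, and $\mathsf t(S)\le\max(\mathsf L(\mathbf d))$ via exactly your petal-swap $\mathbf u'=\mathbf u-\mathbf v_l+\mathbf v_k$. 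The $\omega$-invariant is never touched directly. Your two essential bounds coincide with the paper's; on top of them you prove the redundant inequalities $\mathsf c(S)\le\max(\mathsf L(\mathbf d))$ (chain of unit transfers between compositions of $a$) and $\mathsf t(S)\ge\max(\mathsf L(\mathbf d))$, and, most substantially, you compute $\omega$ from scratch by identifying the minimal elements of the up-set $T_j=\{\mathbf x\in\mathbb N^p : \pi_\mathcal{A}(\mathbf x)-\mathbf a_j\in S\}$ as $\{\mathbf e_j\}\cup\{\mathbf v_m : m\ne k\}$ when $j\in\mathrm{supp}(\mathbf v_k)$ and $\{\mathbf e_j\}$ otherwise; I checked this identification and it is sound (the reduction steps do stay in $T_j$, by reintroducing $\mathbf a_j$ through $\mathbf v_k$, and Proposition~\ref{prop-unificadora}(a) together with reducedness of $S$ rules out anything strictly below a $\mathbf v_m$). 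What the paper's route buys is brevity at the price of two external citations; what yours buys is independence from those citations plus finer information, namely the per-generator values $\omega(\mathbf a_j)=\max_{m\ne k}|\mathbf v_m|$ and $\omega(\mathbf a_j)=1$ for generators lying in no petal. The only point you should not leave as a gesture is the reformulation $\omega(\mathbf a_j)=\max\{|\mathbf x| : \mathbf x\ \text{minimal in}\ T_j\}$: it is standard and your refine-to-atoms sketch is the right argument, but in a written version it needs either the full two-line proof or a precise citation (it appears, with proof, in \cite{BGSG}).
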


\begin{proof}
By \cite[Section 3]{Ge-Ka10a}, $\mathsf c(S)\le \omega(S)$, and from \cite[Theorem 3.6]{Ge-Ha08a}, $\omega(S)\le \mathsf t(S)$. Hence 

\begin{equation}\label{desigualdades}
\mathsf{c}(S) \leq \omega(S) \leq \mathsf{t}(S).
\end{equation}

Let $\mathsf Z(\mathbf{d}) = \{ \mathbf{v}_1, \ldots, \mathbf{v}_s\}.$ 
By Proposition \ref{prop-unificadora} (c), $\mathbf{u}_i\wedge \mathbf{u}_j = 0,\ i \neq j.$ Therefore, given $\mathbf{u}, \mathbf{v} \in \mathsf Z(\mathbf{d})$ such that $\vert \mathbf{u} \vert = \max(\mathsf{L}(\mathbf{d})),$ we have that $\mathsf d(\mathbf{u}_0, \mathbf{u}_1) = \max(\mathsf{L}(\mathbf{d}))$ for every sequence $\mathbf{u}_0, \ldots, \mathbf{u}_s \in \mathsf{Z}(\mathbf{a})$ satisfying (a) and (b) in Definition \ref{Def.catenary}. Therefore, $$\max(\mathsf{L}(\mathbf{d})) \leq \mathsf{c}(S).$$

Finally, for all $\mathbf{a} \in S \setminus \{0\}$, $\mathbf{u}\in {\mathsf Z}(\mathbf{a})$ and every minimal generator $\mathbf{a}_i$ such that $\mathbf{a}-\mathbf{a}_i\in S$, if $i \not\in \mathrm{supp}(\mathbf{u}),$ there exists $\mathbf{u}' \in {\mathsf Z}(\mathbf{a})$ such that $i \in \mathrm{supp}(\mathbf{u}')$ and $\mathbf{u} - \mathbf{u}\wedge \mathbf{u}', \mathbf{u}' - \mathbf{u}\wedge \mathbf{u}' \in \mathsf Z(\mathbf{d}).$ Indeed, by Proposition \ref{prop-unificadora}, $\mathbf{u}  = \sum_{j=1}^s \alpha_j \mathbf{v}_j + \mathbf{w},$ for some $\alpha_j \in \mathbb{N}$ and $\mathbf{w} \in \mathbb{N}^p.$ Since $i \not\in \mathrm{supp}(\mathbf{w}),$ there exists $k$ such that $i \in \mathrm{supp}(\mathbf{v}_k)$ (in particular, $\alpha_k = 0$). Now, if $l \in \{1, \ldots, s\}$ is such that $\alpha_l \neq 0,$ by taking $\mathbf{u}' = \sum_{j=1}^s \alpha_j \mathbf{v}_j + \mathbf{v}_k - \mathbf{v}_l + \mathbf{w}$ we are done.

So, either $\mathsf d(\mathbf{u}, \mathbf{u}') = 0$ or $\mathsf d(\mathbf{u}, \mathbf{u}') \le \max(\mathsf{L}(\mathbf{d}))$,  and consequenly the minimality of $\mathsf t(S)$, yields $$\mathsf{t}(S) \leq \max(\mathsf{L}(\mathbf{d})).$$
The proof now follows from (\ref{desigualdades}).
\end{proof}

In \cite{catenary} it is shown (though with a different notation) that the catenary degree of $S$ depends only on the catenary degree of the Betti elements of $S$, and that the tame degree can be computed from the factorizations of the elements involved in $\mathrm{Gr}_\mathcal A$. In view of Theorem \ref{Th.Main}, if $S$ has a single Betti element, then the only element whose factorizations appear in $\mathrm{Gr}_\mathcal A$ is precisely this single Betti element. By using this fact one could have give an alternative proof to Theorem \ref{Th.Main}.

\end{document}